\documentclass[11pt,a4paper]{article}

\usepackage{amsmath, amssymb, amsthm}
\usepackage{authblk} 
\usepackage{fancyhdr}
\usepackage{graphicx}
\usepackage{times}
\usepackage{enumitem}

\usepackage{caption}
\usepackage{subcaption}

\usepackage[backend=biber,style=ieee]{biblatex}
\usepackage{hyperref}
\hypersetup{
    colorlinks=true,
    citecolor=black,
    anchorcolor=black,
    linkcolor=black,
    filecolor=black,      
    urlcolor=blue,
    }
\usepackage{tikz}
\usepackage[left=2.5cm,right=2.5cm,top=3.8cm,bottom=3.0cm]{geometry}
\usepackage[nameinlink]{cleveref}

\usetikzlibrary{backgrounds}

\newtheoremstyle{tightplain}
  {\topsep}   
  {3pt}       
  {\itshape}  
  {}          
  {\bfseries} 
  {.}         
  {.5em}      
  {}          

\theoremstyle{tightplain}
\newtheorem{theorem}{Theorem}[section]
\newtheorem{lemma}[theorem]{Lemma}  

\newtheorem{definition}[theorem]{Definition}

\makeatletter
\renewcommand{\section}
{\@startsection{section}{1}{0mm}{-\baselineskip}{1mm}{\large \bf}}
\renewcommand{\subsection}
{\@startsection{subsection}{1}{0mm}{-\baselineskip}{1mm}{\normalsize \bf}}
\renewcommand{\subsubsection}
{\@startsection{subsubsection}{1}{0mm}{-\baselineskip}{1mm}{\normalsize \it}}
\makeatother

\newlength{\pagenumbershift}
\def\thepage{\textit{\arabic{page}}}

\usepackage{setspace}

\usepackage{titlesec}

\titleformat*{\section}{\large\bfseries\parskip=-5pt}
\titleformat*{\subsection}{\normalsize\bfseries}

\usepackage{caption}
\titlespacing*{\section}{0pt}{15.5pt}{-4pt}
\titlespacing*{\subsection}{0pt}{5pt}{-4pt}
\titlespacing*{\subsubsection}{0pt}{5pt}{-4pt}

\fancypagestyle{copyrightfooter}{%
  \fancyhf{}
  
  \fancyfoot[L]{
  \small \em \vspace{-35pt} 
  \rule{\textwidth}{0.35pt}\vspace{-1pt} 
  Copyright \textcopyright\,2026 by 
  $<$Cameron Millar, Bernd Schulze, Louis Theran$>$  
  \\
  Published by the International Association for Shell and Spatial Structures (IASS) with permission.
  }
}

\begin{document}
\setlength{\baselineskip}{13pt}


\begin{center}
\vspace{-15pt}
{\bf \fontsize{16}{19}\selectfont 
On weavings, grillages, tensegrities, and frameworks
} 

\vspace{0.2cm}

Cameron MILLAR*, Bernd SCHULZE$^{\,\mathrm a}$, Louis THERAN$^{\,\mathrm b}$
\\

\vspace{10pt}

{
\small
*Skidmore, Owings \& Merrill \\
The Broadgate Tower, 20 Primrose Street, London, UK, EC2A 2EW\\
cameron.millar@som.com 
}

\vspace{5pt}

{
\small
$^{\,\mathrm a}$
School of Mathematical Sciences, Lancaster University, UK
\\ 
$^{\,\mathrm b}$
School of Mathematics and Statistics, University of St Andrews, UK
}

\vspace{-5pt}

\end{center}
\noindent
{\bf \large Abstract}\\[2pt]
We investigate the  stability of discrete structures comprising of woven 
elastic beams in a plane, exposing a connection between admissible 
over / under patterns and the first-order and static rigidity of an associated 
tensegrity that is related through a natural polarity transformation.  
The relationship between the Airy and Whiteley stress functions for the tensegrity and weaving structures is explored. 
Our results lead to an efficient method for finding such an over / under pattern.
The method is illustrated through a worked example modelled on the 
complete bipartite graph $K_{4,4}$.

\begin{spacing}{1}
\vspace{5pt}\small
{\bf Keywords}: 
tensegrity, weaving, grillage, stress functions, state of self stress, mechanism, rigidity, active bending 
\end{spacing}

\vspace{3pt}


\section{Introduction}
Tarnai \cite{Tarnai} and Whiteley \cite{WW89, WW91} introduced the concept of grillages and weavings along with their polar dual counterpart, frameworks and tensegrities. 
Grillages and weavings are structures made of elastic beams lying in a plane that resist bending. The beams cross through each other, but do not slide past each other.  
The work of Tarnai and Whiteley exposed, via a classical projective polarity, a deep connection between grillages and weavings on one side and frameworks and tensegrities on the other.

The aim of this paper is to provide an account of the Tarnai--Whiteley polarity in modern 
language, and to provide a simple design methodology for designing stable over / under 
patterns for weavings. To do this, we first review the Tarnai and Whiteley polarity from a 
simpler affine perspective than the projective geometry approach taken previously \cite{Tarnai,WW89, WW91},
drawing an explicit connection between the virtual work pairing induced by a framework and the 
space of admissible liftings of a weaving.  The dual cone to the liftings of a weaving, which we call the 
\emph{Whiteley stress cone}, is naturally linearly isomorphic to the proper stress cone
(or \emph{Airy stress cone}) of the polar tensegrity.  
The conditions for static equilibrium of the beams in a weaving is explored, in turn establishing the duality to the static equilibrium conditions of the associated tensegrity. 
Furthermore, the relationship between the stresses in the structures and their respective stress cones is explored. 
In turn, this leads to a count akin to the Maxwell-Calladine Index Theorem. 

This mathematics is leveraged in a design tool which allows the engineer to determine the over / under pattern to transform a grillage with a fixed geometry into a stable weaving. 
This is done by considering the dual tensegrity framework, with cables and struts relating to the over / under weaving pattern (essentially reading the sign of the equilibrium stresses in the tensegrity). 
We illustrate the method with a worked example based on the complete bipartite graph $K_{4,4}$.



\section{Structural engineering systems}

\subsection{Tensegrity and the Airy Stress Function}

A framework is made up of discrete bars connected at nodes. The members of a framework are assumed to be 
axially rigid and connected with pins at nodes, like in a truss. 
We define a \textit{bar-joint} framework as a framework in which every member is a bar 
(ie. it can be loaded in both tension and compression). 
We define a \textit{tensegrity} as a framework in which every member is a cable or strut 
(ie. can be loaded in only tension or compression respectively). 
Tensegrity structures often posses one or more mechanisms which are stabilised and stiffened by a state of self-stress (known as \emph{pre-stress stability}). 
The reader is referred to Connelly and Guest \cite{CG22} for a detailed description of tensegrity structures. 

As developed by Maxwell and Airy , the states of self-stress within a framework can be described through the Airy stress function. 
The discrete Airy stress function is a closed plane-faced polyhedron whose projection is the 2D framework (the form diagram). The signed force in a member is given by the signed change in slope across the corresponding edge in the Airy stress function. 
Similarly, the mechanisms in the framework can also be found through an incremental Airy stress function over the dual force diagram \cite{Baker}. 
This can be very powerful in the design of structures as it allows the engineer to visualise and control the forces within a structure. 
In \emph{graphic statics}, the \emph{force diagram} construction comes directly from the Airy stress function over the \emph{form diagram} in Maxwell's construction (via a duality) \cite{Baker}. 

\subsection{Grillages and Weavings}

A grillage is a collection of beams which lie in the plane. Beams relate to bending or flexure in contrast to the axial bars considered in frameworks. 
In a grillage, each joint between beams does not transfer moments, but rather allows crossing beams to exert a vertical force onto the other. 

A weaving is a collection of beams which roughly lie in the plane, but are \textit{actively bent} to produce an `over / under' pattern. At each joint, it is only possible for the `under' beam to exert an upwards force on the `over' beam, and vice versa -- this can be considered a compression only connection between the beams. 
It is assumed that there is no friction nor in-plane force transfer between the beams. 
This pattern can be shown through linework through the adoption of broken and continuous lines, as in Tarnai \cite{Tarnai}. 

\begin{figure}[h]
    \centering
    \includegraphics[width=0.3\linewidth]{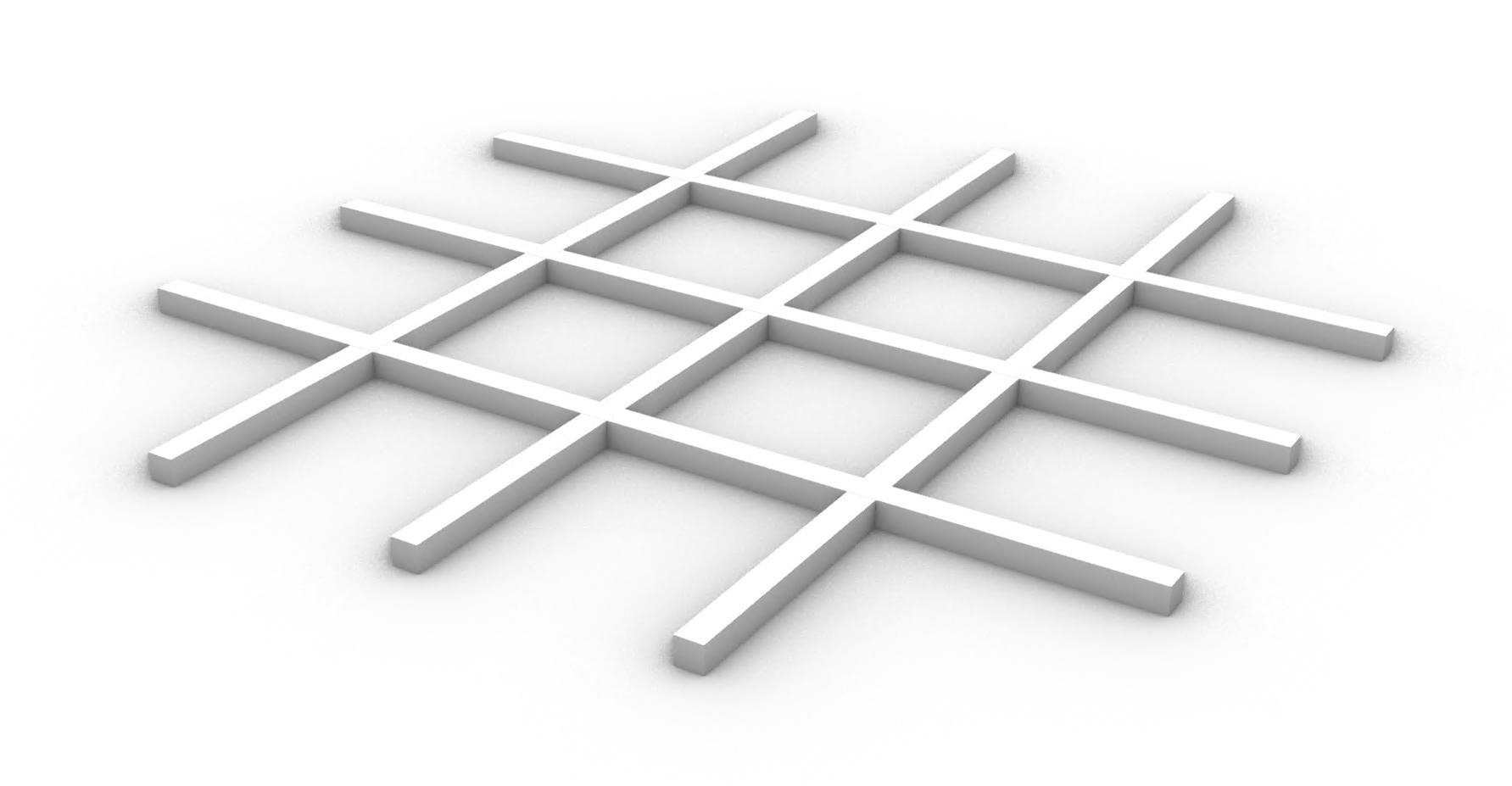}
    \hspace{0.2cm}
    \includegraphics[width=0.3\linewidth]{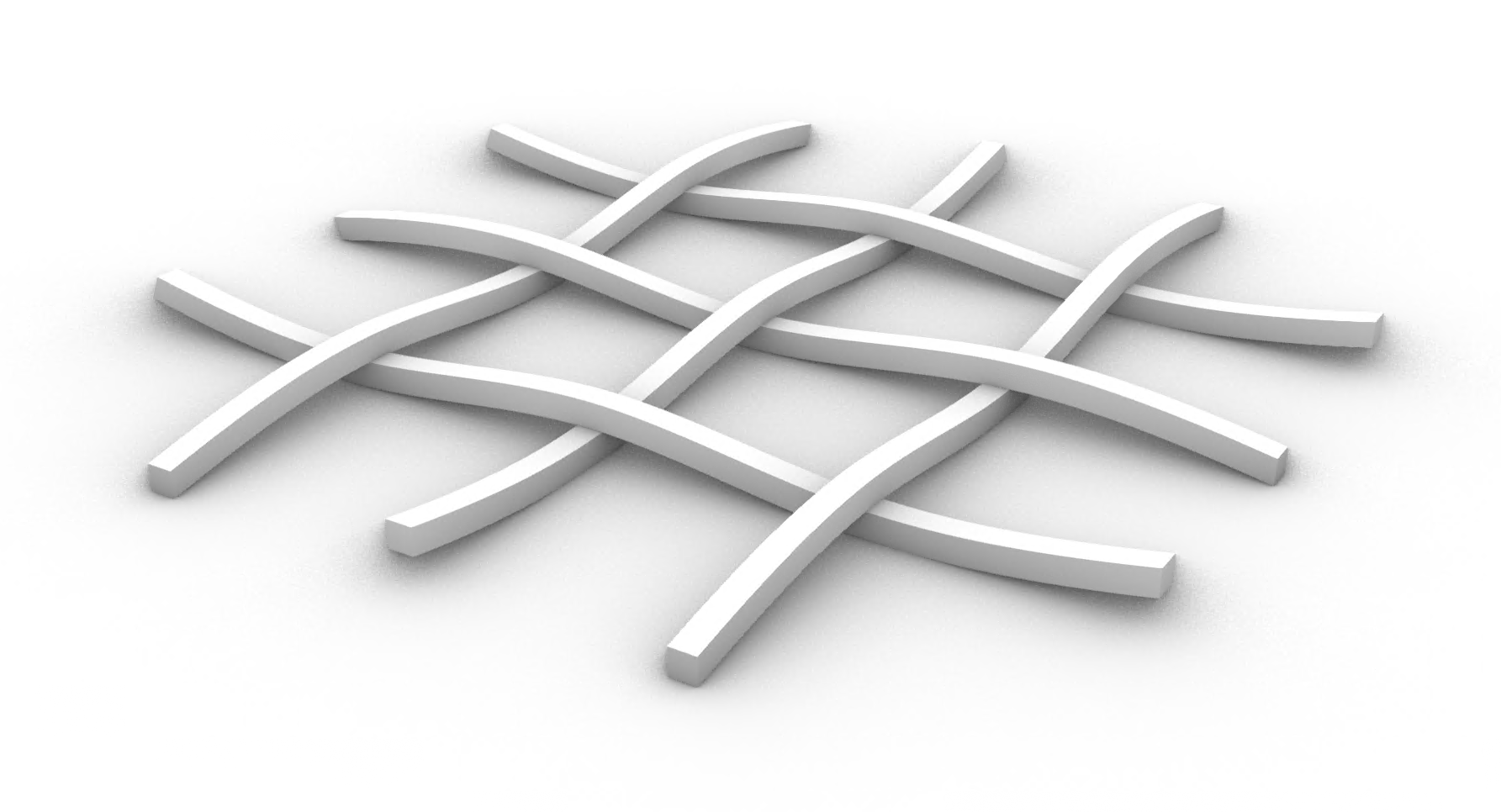}
    \hspace{0.2cm}
    \includegraphics[width=0.22\linewidth]{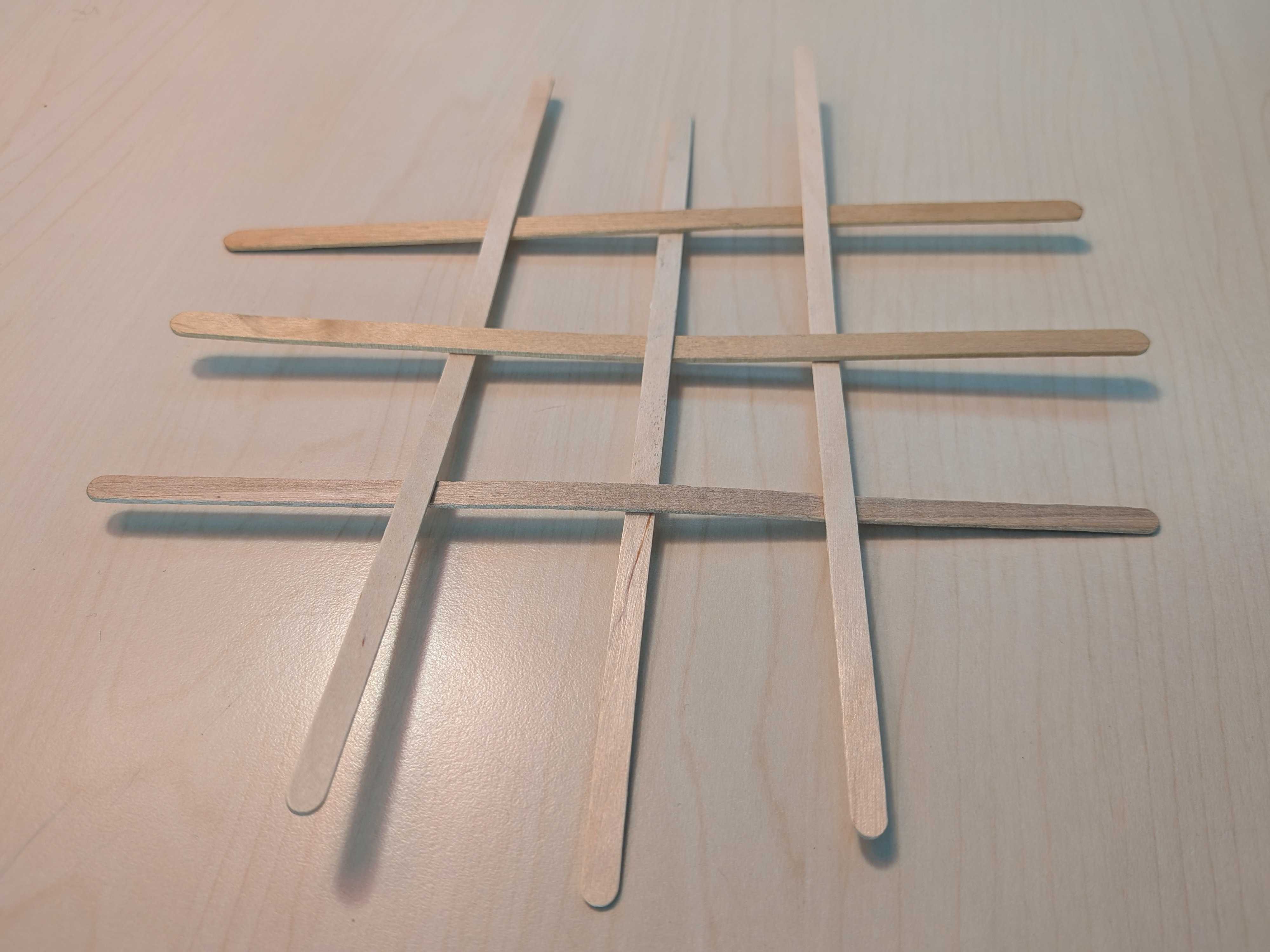}
    \caption{A grillage, a weaving, and the associated `popsicle bomb' made from coffee stirrers. }
    \label{fig:weaving}
\end{figure}

A commonplace example of a weaving is a `popsicle bomb' which stores energy through the principal of 
active bending within the beams. Structures like this have interesting and useful properties.
Firstly, weavings increase the structural depth and therefore have increased out-of-plane stiffness in comparison 
to grillages \cite{SOM}. Secondly, by ensuring connections are entirely compressive, a much simpler node detail can be 
employed. In the particular case of timber beams, it may be feasible for this node to be fabricated without steel / 
mechanical fasteners and therefore the structure could also be disassembled and re-assembled. 
As such, a design methodology for weavings could be very powerful. 

\section{Mathematical preliminaries }
We briefly review some background material that can be found in more detail in Connelly and Guest \cite{CG22}.
Let $G = (V,E)$ be a graph with vertex set $V = \{1,\dots,n\}$ and edge set $E$, where $|E|=m$.  
A \emph{configuration} $p = (p_1,\dots,p_n)$ assigns a point $p_i \in \mathbb{R}^d$ to each vertex $i \in V$.  
The pair $(G,p)$ is called a \emph{bar-and-joint framework} (or simply \emph{framework}) in $\mathbb{R}^d$
(in this paper $d=2$).  
Each edge $ij \in E$ represents a rigid bar connecting vertices $i$ and $j$, which constrains the 
distance between $p_i$ and $p_j$.

A $d$-dimensional \emph{tensegrity} $(G,p)$ is a framework together with an assignment 
$\varepsilon_{ij}=\pm 1$ for each edge, 
where $\varepsilon_{ij} = +1$ corresponds to a strut (which can expand but not contract) 
and $\varepsilon_{ij} = -1$ corresponds to a cable (which can contract but not expand).

\subsection{Infinitesimal rigidity of tensegrities}

An \emph{infinitesimal flex}\footnote{In engineering literature, this is often referred to as a \emph{mechanism}.}, $p'$, of a tensegrity $(G,p)$ is a solution to the equations 
\begin{equation}\label{eq: tensegrity inf flex}
    \varepsilon_{ij}\langle p_i - p_j,p'_i - p'_j\rangle \ge 0,
\end{equation}
An infinitesimal flex is a \emph{bar flex} if the l.h.s. is zero
(i.e., the inequality holds with equality). 

\begin{definition} The 
\emph{flex cone} of $(G,p)$ is the polyhedral cone of the infinitesimal 
flexes of $(G,p)$. 
\end{definition}
The flex cone always contains a linear subspace of 
bar flexes, and we say that $(G,p)$ is \emph{stretched}\footnote{This is a 
non-standard term. }
if the flex cone is exactly equal to the bar flexes and \emph{infinitesimally 
rigid} if it is stretched and all the bar flexes are trivial. Note that, 
for a $2$D tensegrity, there is always a $3$-dimensional space of trivial bar 
flexes (corresponding to rigid body motions), 
provided the tensegrity does not lie on a line.

Let us define a virtual work 
pairing $(-,-)$ between $(\mathbb{R}^2)^n$ and 
$(\mathbb{R}^m)^*$ by 
\begin{equation}\label{eq: tensegrity pairing}
    (\omega,p') = 
    \sum_{ij\in E}\omega_{ij}\varepsilon_{ij}\langle p_i - p_j,p'_i - p'_j\rangle.
\end{equation}
Since we are considering kinematics, we read the pairing 
as follows.  We identify $\mathbb{R}^{2n}$ with 
the space of infinitesimal flexes.  The rigidity matrix\footnote{This is associated with the compatibility matrix in common structural engineering literature, and is also the transpose of the equilibrium matrix. The difference is that $R(p) = \operatorname{diag}(\|p_i - p_j\|)C(p)$. }, $R(p)$, 
is the $m\times 2n$ matrix of the linear transformation
\[
    p'\mapsto \left(
    \langle p_i - p_j, p'_i - p'_j\rangle
    \right)_{ij\in E(G)}
\]
that transforms flexes to infinitesimal edge extensions, so
in the standard basis, we have 
\[
    (\omega,p') = \omega^\top 
    \operatorname{diag}(\varepsilon_{ij})R(p)p'.
\]
The stress\footnote{Stress is a term from rigidity theory; each coefficient $\omega_{ij}$ of a stress $\omega$ is called a bar axial force in common structural engineering literature. }, $\omega$, which corresponds to forces in the members, 
acts on the extensions, so it is an element of the 
dual space $(\mathbb{R}^m)^*$.

\begin{definition}
Under virtual work pairing above, 
the dual of the flex cone is the \emph{proper stress cone}
\[
    \{\omega \ge 0 : (\omega,p') = 0 \text{ for all flexes $p'$}\}.
\]
\end{definition}
Infinitesimal rigidity of a tensegrity is characterized by infinitesimal 
rigidity of an underlying bar framework and a stress condition.
\begin{lemma}\label{lem: tensegrity tight}
A tensegrity $(G,p)$ is stretched if and only if there is a positive 
vector in the proper stress cone.
\end{lemma}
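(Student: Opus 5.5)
This is a direct application of linear programming duality, in the form of Farkas' lemma or Gordan/Stiemke's theorem, to the finitely many linear functionals
\[
f_{ij}(p') = \varepsilon_{ij}\langle p_i - p_j, p'_i - p'_j\rangle, \qquad ij \in E.
\]
The flex cone is $F = \{p' : f_{ij}(p') \ge 0 \text{ for all } ij\}$. The bar flexes are the lineality space $L = \{p' : f_{ij}(p') = 0 \text{ for all } ij\}$. By definition, $(\omega, p') = \sum \omega_{ij} f_{ij}(p')$. Note that $(G,p)$ is stretched exactly when $F = L$, i.e.\ when there is no $p'$ with all $f_{ij}(p') \ge 0$ and some $f_{ij}(p') > 0$.

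\emph{Easy direction.} Suppose $\omega$ lies in the proper stress cone with all $\omega_{ij} > 0$. Since $\omega$ annihilates every flex, for any flex $p'$ we get $0 = (\omega, p') = \sum \omega_{ij} f_{ij}(p')$. Every term in this sum is nonnegative, and every $\omega_{ij}$ is strictly positive. Hence $f_{ij}(p') = 0$ for all $ij$, so $p'$ is a bar flex, and $(G,p)$ is stretched.

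\emph{Converse.} Assume $F = L$. Then for each fixed edge $e$ the system $f_{ij}(p') \ge 0$ for all $ij$, together with $f_e(p') > 0$, has no solution. Equivalently, $-f_e$ is nonnegative on the polyhedral cone $F$. By Farkas' lemma, $-f_e$ is a nonnegative combination of the $f_{ij}$. Writing $f_e + \sum_{ij} \lambda^{(e)}_{ij} f_{ij} = 0$ with $\lambda^{(e)} \ge 0$, we obtain a stress $\omega^{(e)} \ge 0$ with $\omega^{(e)}_e \ge 1$ such that $\sum \omega^{(e)}_{ij} f_{ij} \equiv 0$ as a linear functional on $\mathbb{R}^{2n}$. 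Summing $\omega = \sum_e \omega^{(e)}$ gives a strictly positive vector with $(\omega, p') = 0$ for every $p' \in \mathbb{R}^{2n}$, so in particular for every flex. Thus $\omega$ is a positive vector in the proper stress cone. (Alternatively, one can invoke Stiemke's theorem in a single step.)

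\emph{Main subtlety.} The step needing the most care is the Farkas application. It requires that a linear functional nonnegative on $\{x : Ax \ge 0\}$ lie in the cone generated by the rows of $A$. This is the standard homogeneous Farkas lemma, so I would cite it directly. I would also remark that the stress obtained actually annihilates all of $\mathbb{R}^{2n}$, i.e.\ it is a genuine equilibrium self-stress satisfying $\omega^\top \operatorname{diag}(\varepsilon_{ij}) R(p) = 0$. This is stronger than merely annihilating the flexes, which shows that the two possible readings of the definition of the proper stress cone give the same condition here.
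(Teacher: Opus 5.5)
Your proof is correct. The paper does not prove this lemma itself; it is quoted as background from Connelly--Guest. It does, however, prove the formally identical weaving version, Lemma~\ref{lem: weaving tight}, and your easy direction is the same as the one given there.

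For the converse the routes differ. The paper argues by contrapositive in one shot. If the subspace $Y$ of self-stresses misses the open nonnegative orthant, then $Y^\perp$, the image of $p'\mapsto (f_{ij}(p'))_{ij}$, contains a nonzero nonnegative vector. Any preimage of that vector is then a flex that is not a bar flex. This is exactly the Stiemke alternative you mention in passing. Note that the paper's stated reason (``since $\mathbb{R}^m = Y\oplus Y^\perp$'') does not by itself give the nonnegative vector; a separating-hyperplane argument is implicit there.

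You instead build the positive stress directly, taking one homogeneous Farkas certificate per edge and summing them. This rests only on the most standard form of Farkas' lemma. It also gives a sharper, edge-local fact: $e$ is in the support of some nonnegative self-stress if and only if $f_e$ vanishes on the flex cone.

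Your remark on the two readings of the definition is also apt. Under the literal reading, where stresses need only annihilate the flex cone, the converse is trivial: once $F=L$, every $\omega\ge 0$ annihilates $F$. The substantive statement is the equilibrium version you prove. That is also the form the paper relies on later, when it treats proper stresses as vertexwise equilibria in Theorem~\ref{thm: whiteley stress polarity}.
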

Assuming that there is a positive $\omega$ in the stress 
cone, the lemma says that the only tensegrity flexes are 
bar flexes, and so we obtain the following:
\begin{theorem}\label{thm: tensegrity inf rigidity}
A tensegrity $(G,p)$ is infinitesimally rigid 
if and only if $(G,p)$ is 
infinitesimally rigid as a bar framework and there is a positive 
vector in the proper stress cone.
\end{theorem}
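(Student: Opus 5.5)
The theorem follows by combining the definitions with Lemma~\ref{lem: tensegrity tight}. We prove the two directions separately. Throughout, the tensegrity flex cone $F$ is the set of $p'$ satisfying \eqref{eq: tensegrity inf flex}, and $B=\ker R(p)$ is the subspace of bar flexes. Note that $B\subseteq F$ always, since bar flexes satisfy every inequality with equality. By definition, $(G,p)$ is infinitesimally rigid as a tensegrity precisely when $F=B$ (stretched) and every element of $B$ is trivial.

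For the ``if'' direction, suppose the underlying bar framework is infinitesimally rigid, so $B$ consists only of trivial flexes. Suppose also that there is a positive $\omega$ in the proper stress cone. Lemma~\ref{lem: tensegrity tight} then gives that $(G,p)$ is stretched, i.e.\ $F=B$. Hence every tensegrity flex is a bar flex, and therefore trivial, which is exactly infinitesimal rigidity of the tensegrity.

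For the ``only if'' direction, suppose the tensegrity is infinitesimally rigid. By definition it is stretched, so Lemma~\ref{lem: tensegrity tight} provides a positive vector in the proper stress cone. Also by definition all bar flexes are trivial, and this is precisely infinitesimal rigidity of $(G,p)$ as a bar framework.

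The only real content lies in Lemma~\ref{lem: tensegrity tight}, which the paper states before the theorem; we use it rather than reprove it. If it needed justification, the route would be Farkas/Gordan duality. If $\omega>0$ annihilates all flexes, then for a flex $p'$ we have $\sum_{ij}\omega_{ij}\,\varepsilon_{ij}\langle p_i-p_j,p_i'-p_j'\rangle=0$ with every term nonnegative. Each term must therefore vanish, so $p'$ is a bar flex. Conversely, if $F=B$, no $p'$ makes any inequality strict. Gordan's alternative applied to the rows $\varepsilon_{ij}R(p)_{ij}$ then yields a strictly positive combination of these rows equal to zero, which is a positive proper stress.

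The step needing the most care is matching the definitions of ``stretched'' and ``infinitesimally rigid'' exactly as the paper phrases them, so that the equivalence is genuinely definitional once the lemma is in hand.
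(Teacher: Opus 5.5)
Your proof is correct and follows the paper's argument: the theorem is the definition of infinitesimal rigidity (stretched, with all bar flexes trivial) combined with Lemma~\ref{lem: tensegrity tight}, and you write out both directions that the paper compresses into a single remark. One small slip in your optional aside on the lemma: Gordan's alternative applied to the rows $\varepsilon_{ij}R(p)_{ij}$ only yields a nonzero non-negative stress, so to get a strictly positive one you need Stiemke's lemma (equivalently, Gordan applied on the stress side, as the paper does with $Y$ and $Y^\perp$ in its proof of Lemma~\ref{lem: weaving tight}), or Farkas applied edge by edge with the resulting stresses summed.
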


\subsection{Static rigidity of tensegrities}
Now we review how kinematics and statics of tensegrities are 
dual theories.  To this end, we assume that an external 
load, $f$, is applied at the nodes of the tensegrity.  By Newton's 
third law, equilibrium requires no net translational force and no 
net torque\footnote{Torque has to be computed as a rotational
moment around a reference point, but once there is no net 
force, the choice doesn't matter --- the origin has been chosen for simplicity. }.  
These conditions are encoded by the equations
\begin{equation}\label{eq: equilibrium load}
     0 = \sum_{i\in V(G)} f_i\qquad \text{and}\qquad 0 = 
    \sum_{i\in V(G)} \langle \rho(p_i),f_i\rangle
\end{equation}
where $\rho$ is a counter-clockwise rotation through an angle of 
$\pi/2$.  Such a load $f$ is called an \emph{equilibrium 
load}.
We then say that a non-negative\footnote{For bar frameworks, we 
drop the non-negative requirement.}
stress $\omega$ resolves $f$ if, for each vertex $i$, 
\[
    -f_i = \sum_{j\sim i}\omega_{ij}\varepsilon_{ij}(p_i - p_j),
\]
which is the equation 
\(
    -f = R^*(p)\omega.
\)
A tensegrity is called \emph{statically rigid} if every equilibrium load 
is resolved by some non-negative stress.
Since $-f$ failing to be in the image of the non-negative 
stresses is witnessed by a $v\in \mathbb{R}^{2n}$
such that $\langle -f,v\rangle\neq 0$ but $(\omega,v) = 0$
for all $\omega\ge 0$, we get 
the standard result:
\begin{theorem}\label{thm: tensegrity static kinematic duality}
A tensegrity $(G,p)$ is infinitesimally rigid if and only if 
it is statically rigid.
\end{theorem}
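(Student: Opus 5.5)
The plan is to treat this as a finite-dimensional cone duality (Farkas lemma) argument, using the pairing $(\omega,p')=\omega^\top\operatorname{diag}(\varepsilon_{ij})R(p)p'$. Write $A=\operatorname{diag}(\varepsilon_{ij})R(p)$. Then the flex cone is $F=\{p' : Ap'\ge 0\}$, the trivial flexes form a $3$-dimensional subspace $T$ (assuming the configuration is not collinear), and the equilibrium loads are exactly $T^\perp$. For equilibrium loads this is a direct check: $\langle f,p'\rangle=0$ for translations $p'_i=t$ and for rotations $p'_i=\rho(p_i)$ is precisely \eqref{eq: equilibrium load}. Static rigidity says that $T^\perp\subseteq -A^\top(\mathbb{R}^m_{\ge 0})$. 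Infinitesimal rigidity says $F=T$. I will also use that $A^\top\omega$ always lies in $T^\perp$, since trivial flexes satisfy $R(p)p'=0$.

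\emph{Infinitesimally rigid $\Rightarrow$ statically rigid.} Let $K=A^\top(\mathbb{R}^m_{\ge0})$, a finitely generated and hence closed convex cone. Suppose some equilibrium load $f$ has $-f\notin K$. By Farkas' lemma, in exactly the form sketched before the statement, there is $v\in\mathbb{R}^{2n}$ with $\langle -f,v\rangle<0$ and $\langle A^\top\omega,v\rangle=(\omega,v)\ge 0$ for all $\omega\ge0$. The second condition says $Av\ge 0$, so $v$ is a flex. By rigidity $v\in T$, and then $\langle f,v\rangle=0$ because $f$ is an equilibrium load. This contradicts $\langle -f,v\rangle<0$.

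\emph{Statically rigid $\Rightarrow$ infinitesimally rigid.} Here I would argue directly. Take a flex $p'$, so $Ap'\ge0$. Decompose $p'=t+q$ with $t\in T$ and $q\in T^\perp$. Then $Aq=Ap'\ge 0$ and $q$ is itself an equilibrium load, so by static rigidity $-q=A^\top\omega$ for some $\omega\ge0$. Now compute
\[
-\|q\|^2=\langle A^\top\omega,q\rangle=\omega^\top Aq\ge 0,
\]
so $q=0$ and $p'=t$ is trivial. Hence $F=T$, i.e. the tensegrity is stretched with only trivial bar flexes, which is infinitesimal rigidity. Alternatively, this direction follows from \Cref{thm: tensegrity inf rigidity} together with \Cref{lem: tensegrity tight}, but the direct argument is shorter.

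The only delicate point is the Farkas step in the first direction. It needs closedness of $K$, which holds because $K$ is finitely generated, and it needs the witness to be sign-definite rather than merely $\neq 0$ as stated informally before the theorem. I will phrase it via the separating hyperplane theorem to make that precise. The degenerate collinear case, where $\dim T$ changes, is excluded by the standing assumption; otherwise I would define $T$ as the span of translations and rotations throughout.
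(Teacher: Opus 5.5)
Your proof is correct in both directions. It rests on the same load/flex duality that the paper invokes in the single sentence before the statement, but you carry it out differently. The paper's witness is of equality type: $(\omega,v)=0$ for all $\omega\ge 0$ forces $Av=0$ (with your $A=\operatorname{diag}(\varepsilon_{ij})R(p)$), so $v$ is a bar flex. A witness of that form exists for every non-resolvable load exactly when $A^\top(\mathbb{R}^m_{\ge 0})$ is a linear subspace, i.e.\ exactly when there is a positive proper stress. Read charitably, the paper's route therefore goes through \Cref{lem: tensegrity tight} and \Cref{thm: tensegrity inf rigidity}. The positive stress collapses the cone to $\operatorname{im}A^\top$, and the claim reduces to the bar-framework identity $\operatorname{im}A^\top=(\ker A)^\perp$; the converse direction is left implicit. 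You instead apply Farkas to the cone itself with a sign-definite witness, which is automatically a tensegrity flex, so you never need a positive stress. You then handle the converse directly with the weak-duality computation $-\|q\|^2=\omega^\top Aq\ge 0$ on the equilibrium component of a flex. Your version is self-contained and does not depend on knowing a positive stress in advance; the paper's is shorter only because it borrows the stress characterization. One minor remark: your argument never uses $\dim T=3$, only that $AT=0$ and that $T^\perp$ is the space of equilibrium loads. So the collinear caveat can be dropped, and in fact $\dim T=3$ unless all the $p_i$ coincide.
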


\section{Polar duality of tensegrities and weavings}
Here, we revisit duality results of Whiteley and Tarnai \cite{Tarnai,WW89, WW91}.
Just as the primitive objects of a tensegrity are its joints and 
elements, for a weaving the corresponding concepts are the 
lines modeling its beams and the crossing points.
This section initially develops the polar dual relationship between tensegrities and weavings from both a kinematic and static perspective. It then discusses the index theorem through the lens of previously introduced terms and the duality between tensegrities and weavings. 
Finally, it develops the duality between the Airy stress function and the `Whiteley stress function'.


\subsection{Geometric model of a weaving}
Let us describe a line $L$ in $\mathbb{R}^2$ by an affine linear equation%
\footnote{This form requires that $L$ not pass through the origin, which is easy to 
arrange in a finite system.}%
: $L = \{x \in \mathbb{R}^2 : \langle x,p\rangle = 1\}$, for a fixed $p\in \mathbb{R}^2$.
The crossing point $q$ of two lines $L$ and $L'$ that are not parallel 
is
\(
    q = s_{ij}\rho(p - p'), 
\)
where $s_{ij} = 1/\det [p\, p']$.
The formal definition of a weaving is as follows.
\begin{definition}[Weaving]
A \emph{weaving} is a set of $n$ lines $\{L_1,\dots,L_n\}$,
where $L_i$ is defined by a point $p_i\in \mathbb{R}^2$,
a graph $G=(V,E)$ with vertex set $\{1, \ldots, n\}$, and 
for each edge $ij$ of $G$ with $i < j$, an assignment 
$\varepsilon_{ij}\in\{\pm 1\}$ which records the over / under relation at each
crossing.
The convention is
that $\varepsilon_{ij}=+1$ means $L_i$ passes over $L_j$ at $q_{ij}$,
while $\varepsilon_{ij}=-1$ means that $L_i$ passes under $L_j$.
\end{definition}

\subsection{Liftings of a weaving}
Following Whiteley \cite{WW89, WW91}, a weaving starts in the $xy$-plane.  Its allowed 
`motions' lift the lines into $\mathbb{R}^3$, subject to constraints, which we 
now explain. A fixed vector $v\in \mathbb{R}^2$ defines a 
linear map $\mathbb{R}^2\to \mathbb{R}$ by $w\mapsto \langle w, v\rangle$.
A linear height function on a line $L$ is the restriction of such a 
map to $L$.  The lifting of a line $L$ by a linear height function 
$v$ is the line $L_v = \{\bar x + \langle x,v\rangle e_3 : x\in L\}$, 
where $\bar x$ extends the vector $x\in \mathbb{R}^2$ by a zero 
coordinate and $e_3 = (0,0,1)^\top$.

If $L$ and $L'$ are lines
crossing at point $q$ are lifted with height functions $v$ and 
$v'$, then the difference in heights of $L_v$ and $L'_{v'}$ over 
$q$ is given by $\langle q,v - v'\rangle$.  The 
definition of a  lifting of a weaving says that the over / 
under constraints are satisfied.
\begin{definition}[Lifting] A \emph{lifting} of a weaving is an assignment of a linear height 
function $v_i$ to each $L_i$ satisfying 
\begin{equation}\label{eq: lifting ineuality}
        \varepsilon_{ij}\langle q_{ij}, v_i - v_j\rangle\ge 0.
\end{equation}
The \emph{lifting cone} is the set of solutions to the lifting inequalities, with the weaving fixed
and the $v_i$ variable.
\end{definition}
The lifting cone is a polyhedral cone in $\mathbb{R}^{2n}$.  A lifting $v$ is called \emph{tight} if the lifting 
inequalities are satisfied with equality.
There is always a $3$-dimensional linear subspace of tight liftings, corresponding
to the freedom of choosing an affine plane in $\mathbb{R}^3$, which we can describe 
explicitly as follows. Setting $v_i = v_j$ for all $i$ and $j$ produces a tight 
lifting.  
We have $q_{ij} = s_{ij}\rho(p_i - p_j)$.  So setting $v_i = p_i$ leads to 
\[
    \langle q_{ij}, v_i - v_j\rangle = \langle s_{ij}\rho(p_i - p_j), p_i - p_j\rangle = 0,
\]
which shows we have a tight lifting. We call any lifting in the linear span of these liftings \emph{trivial}.  
\begin{definition}
A weaving is called \emph{tight} if every point in the lifting cone corresponds to a lifting 
where the inequalities hold with equality, i.e. a tight lifting. A weaving is called \emph{flat} if the lifting cone 
is equal to the linear space of trivial liftings. 
\end{definition}
We  mention that there are tight weavings that are not flat, which is why we introduce 
both concepts.
Another useful notion is that of a \emph{flat grillage}, which is a weaving where all the 
liftings with all inequalities tight are trivial.  

To understand flat weavings, we introduce the Whiteley stress, which is 
formally similar to an equilibrium stress in a tensegrity.
We induce a virtual work pairing $(-,-)$ between $(\mathbb{R}^m)^*$ and $\mathbb{R}^{2n}$ by 
\begin{equation}\label{eq: weaving paring}
    (\omega,v) = \sum_{ij\in E} \omega_{ij}\varepsilon_{ij}\langle q_{ij}, v_i - v_j\rangle,
\end{equation}
which we then use to define a stress cone.
In the standard basis, the matrix 
of this virtual work pairing is 
\[
    \operatorname{diag}(\varepsilon_{ij}s_{ij})R(p)(I_n\otimes \rho^{\top}),
\]
where $R(p)$ is the rigidity matrix of the framework $(G,p)$, 
the diagonal matrix on the left is the stress scaling in the 
isomorphism from Whiteley stresses to Airy stresses in 
\Cref{thm: whiteley stress polarity} below and the Kronecker product on the 
right is the geometric polarity acting on the domain.  This is the 
equivalent of the rigidity matrix for a weaving and can relate the applied loads to the internal forces.

\begin{definition}
Using the pairing above, we define the \emph{Whiteley stress cone} of the weaving to be the dual of the lifting cone:
\[
    \{\omega \ge 0: \text{$(\omega,v) = 0$ for all liftings $v$}\}.
\]
We call the elements of the Whiteley stress cone \
\emph{proper stresses}.
\end{definition}
If $\omega$ is a Whiteley stress, then $\omega_{ij}/|s_{ij}|$ is the force between 
beams $L_i$ and $L_j$ at their crossing point. 
Thus, the Whiteley stress has units of Force $\times$ Length\textsuperscript{-2} ($s_{ij}$ has units of Length\textsuperscript{-2}).
Whiteley stresses stabilize weavings in a manner similar to how equilibrium stresses 
stabilize a tensegrity.
\begin{lemma}\label{lem: weaving tight}
A weaving has only tight liftings if and only if it has a positive proper stress.
\end{lemma}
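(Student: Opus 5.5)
The plan is to treat \Cref{lem: weaving tight} as a linear-programming duality statement of Farkas/Gordan type, following the pattern of \Cref{lem: tensegrity tight}. Let $A$ be the $m\times 2n$ matrix whose $ij$-row is the linear functional $\ell_{ij}(v)=\varepsilon_{ij}\langle q_{ij}, v_i - v_j\rangle$. In the standard basis this is the matrix $\operatorname{diag}(\varepsilon_{ij}s_{ij})R(p)(I_n\otimes\rho^\top)$ written above. Then:
\begin{itemize}
\item the lifting cone is $\{v : Av\ge 0\}$;
\item the pairing is $(\omega,v)=\omega^\top A v$;
\item a proper stress is an $\omega\ge 0$ with $\omega^\top A v=0$ for every lifting $v$.
\end{itemize}

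\textbf{First step: reformulate proper stresses.} I will show that $\omega\ge 0$ is a proper stress if and only if $\omega^\top A=0$. The backward direction is clear. For the forward direction, the tight (in particular trivial) liftings form a linear subspace inside the lifting cone, but that alone does not give $\omega^\top A=0$. Instead I will argue via the dual cone. The dual of $\{v:Av\ge0\}$ is $\{A^\top\omega:\omega\ge0\}$ by Farkas. Since $\omega^\top A v$ vanishes on the cone $C=\{Av\ge 0\}$, the vector $y=A^\top\omega$ satisfies both $y\in C^*$ and $-y\in C^*$. Hence $y$ lies in the lineality space of $C^*$, which is $(\operatorname{span} C)^\perp$.

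\textbf{Second step ($\Leftarrow$).} Suppose $\omega>0$ is a proper stress and $v$ is a lifting. Then $\sum_{ij}\omega_{ij}\ell_{ij}(v)=0$, and every term is nonnegative. Since each $\omega_{ij}>0$, every $\ell_{ij}(v)=0$, so $v$ is tight. This direction uses only the definition.

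\textbf{Third step ($\Rightarrow$).} Suppose every lifting is tight, i.e. $\{v:Av\ge 0\}=\ker A$. I want a strictly positive $\omega$ with $(\omega,v)=0$ for all liftings, which is automatic for any $\omega$ once $v\in\ker A$. So it suffices to find $\omega>0$ in the Whiteley stress cone, and positivity is the real content. For that I apply Gordan's alternative (equivalently Stiemke's lemma or Motzkin transposition). Either there is $v$ with $Av\ge 0$ and $Av\ne 0$, or there is $\omega>0$ with $\omega^\top A=0$. The hypothesis rules out the first alternative, so some $\omega>0$ has $A^\top\omega=0$, and this $\omega$ pairs to zero with every $v$. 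The argument is identical in form to \Cref{lem: tensegrity tight}, with the lines $L_i$ and the functionals $\ell_{ij}$ replacing the joints and edge extensions.

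\textbf{Main obstacle.} The only nontrivial point is invoking the correct theorem of the alternative, since positivity rather than mere nonnegativity is required. I will derive Stiemke's version directly. For each edge $e$, the hypothesis gives that the LP $\max\, \ell_e(v)$ subject to $Av\ge0$ is bounded with value $0$. LP duality then yields $\omega^{(e)}\ge0$ with $A^\top\omega^{(e)}=-e_e^\top A$, that is, $(\omega^{(e)}+e_e)^\top A=0$. Summing over all edges $e$ gives $\omega=\sum_e(\omega^{(e)}+e_e)$, which has every coordinate at least $1$ and satisfies $\omega^\top A=0$.
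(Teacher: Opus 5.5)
Your proof is correct and is essentially the paper's. The easy direction is identical, and the hard direction rests on the same Stiemke-type alternative for $A$: the paper runs it contrapositively (if the stress space $Y$ misses the open positive orthant, then $Y^{\perp}\subseteq\operatorname{im}A$ contains a nonzero nonnegative $w=Av$, i.e.\ a non-tight lifting) and only asserts the alternative from $\mathbb{R}^m=Y\oplus Y^{\perp}$, whereas you run it directly and prove it by per-edge LP duality and summation.

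Drop your ``first step'', though. The equivalence it announces is false for the literal definition of the Whiteley stress cone: when every lifting is tight, every $\omega\ge 0$ pairs to zero with the lifting cone $\ker A$, yet $A^{\top}e_e\neq 0$ for each edge $e$. You never use it, and it is your third step that gives the intended, stronger conclusion $\omega>0$ with $A^{\top}\omega=0$.
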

\begin{proof}
First suppose that the weaving has a positive proper stress $\omega$ (this is the easy direction).  Let $v$ be a 
lifting.  By hypothesis, 
\[
    0 = (\omega,v) = \sum_{ij\in E} \omega_{ij}\varepsilon_{ij}\langle q_{ij}, v_i - v_j\rangle.
\]
Because $v$ is a lifting and $\omega_{ij} > 0$, every term 
must vanish, which shows that $v$ is tight.


Now suppose that the weaving has no positive proper stress (this is the harder direction).  Let 
$Y\subseteq \mathbb{R}^m$ be the linear subspace of 
$\{\tau : \text{$(\tau,v) = 0$ for all liftings $v$}\}$. 
Note that the proper stress cone is the intersection of $Y$ with the
non-negative orthant, so the hypothesis that there is no positive proper stress implies that 
$Y$ is disjoint from the interior of the non-negative orthant.  Since $\mathbb{R}^m = Y\oplus Y^\perp$, 
we conclude that $Y^\perp$ contains a non-zero, non-negative vector $w$.  From the definition of the pairing, $Y^\perp$ is the image of the linear map
\begin{equation*}
    u\mapsto \left(\varepsilon_{ij}\langle q_{ij},u_i - u_j\rangle\right)\qquad \mathbb{R}^{2n}\to \mathbb{R}^m,
\end{equation*}
so there is a pre-image $v$ of $w$ in $\mathbb{R}^{2n}$.  Non-negativity of $w$ implies that $v$ is a non-tight 
lifting.
\end{proof}
As a corollary we obtain Whiteley's first result.
\begin{theorem}\label{thm: whiteley flat weaving}
A weaving is flat if and only if it is a flat grillage and it has a positive proper stress.
\end{theorem}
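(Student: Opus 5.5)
The theorem follows by combining \Cref{lem: weaving tight} with the definition of a flat grillage. A weaving is flat when its lifting cone equals the space of trivial liftings. We split this into two conditions. First, every lifting is tight. Second, every tight lifting is trivial. The plan is to show that flatness is equivalent to both conditions holding together, and then to translate each condition separately.

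For the forward direction, suppose the weaving is flat. Every trivial lifting is tight, as computed just before the definition of tightness: both the constant liftings and the lifting $v_i=p_i$ satisfy all lifting equations with equality. Hence every lifting is tight, and \Cref{lem: weaving tight} gives a positive proper stress. Next, any tight lifting satisfies the inequalities \eqref{eq: lifting ineuality}, so it lies in the lifting cone. By flatness it is therefore trivial, and the weaving is a flat grillage.

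For the converse, suppose the weaving is a flat grillage and has a positive proper stress. By \Cref{lem: weaving tight}, every lifting in the lifting cone is tight. By the flat grillage hypothesis, every tight lifting is trivial. So the lifting cone is contained in the trivial liftings. The reverse inclusion holds because trivial liftings are tight, and hence are liftings. This gives equality of the two sets, so the weaving is flat.

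No real obstacle remains once \Cref{lem: weaving tight} is available. The only point needing care is the identification of tight liftings with solutions of the equality system $\langle q_{ij},v_i-v_j\rangle=0$. This identification is needed so that the definition of a flat grillage, stated in terms of liftings with all inequalities tight, applies exactly to the tight elements of the lifting cone. It holds because $\varepsilon_{ij}=\pm1$ is nonzero, so each equality $\varepsilon_{ij}\langle q_{ij},v_i-v_j\rangle=0$ is equivalent to $\langle q_{ij},v_i-v_j\rangle=0$, independently of the over/under pattern.
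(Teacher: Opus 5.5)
Your proof is correct and matches the paper, which presents this theorem as an immediate corollary of \Cref{lem: weaving tight}. The argument is exactly yours: split flatness into ``every lifting is tight'' and ``every tight lifting is trivial,'' using that trivial liftings are tight. Your checks that trivial liftings lie in the lifting cone, and that the flat-grillage condition does not depend on the signs $\varepsilon_{ij}$, are details the paper leaves implicit.
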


\subsection{Kinematic correspondence with tensegrities}
Now we are ready to describe the polarity.  Let us 
fix a weaving $(G,p)$ with signs $\varepsilon$.  The 
\emph{polar tensegrity} is defined by $(G,p)$ and 
signs 
$\varepsilon'_{ij} = \operatorname{sign}(s_{ij})\varepsilon_{ij}$.
Suppose that $\omega$ is a proper stress of the weaving.  So we 
have, for each vertex $i$ 
\[
    0 = \sum_{j : ij\in E} \omega_{ij}\varepsilon_{ij}q_{ij} 
    \quad 
    \Longleftrightarrow
    \quad 
    0 = 
    \sum_{j : ij\in E} |s_{ij}|(\frac{1}{|s_{ij}|}\omega_{ij})\varepsilon'_{ij}q_{ij} = 
    \rho\left(\sum_{j : ij\in E} (\frac{1}{|s_{ij}|}\omega_{ij})\varepsilon'_{ij}(p_i - p_j)\right),
\]
Since $\rho$ is invertible, this shows that 
$\omega' = (\omega_{ij}/|s_{ij}|)$ is a proper stress of the polar tensegrity $(G,p)$. Thus we can determine the forces between beams in a weaving from the forces in the tensegrity. This discussion establishes 
Whiteley's correspondence between the proper stress cones of a tensegrity and its associated weaving:
\begin{theorem}\label{thm: whiteley stress polarity}
The Whiteley stress cone of the weaving $(G,p)$ is linearly 
isomorphic to the proper stress cone of its 
polar tensegrity under the positive diagonal scaling 
$\omega_{ij}\mapsto \omega_{ij}/|s_{ij}|$.
\end{theorem}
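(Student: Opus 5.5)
The plan is to show that the diagonal map $D:\omega\mapsto\omega'$, $\omega'_{ij}=\omega_{ij}/|s_{ij}|$, sends the Whiteley stress cone onto the proper stress cone of the polar tensegrity. Since $D$ is a positive diagonal scaling of $\mathbb{R}^m$, it is a linear automorphism preserving the non-negative orthant. So it suffices to show that it carries the equilibrium conditions of the weaving exactly onto those of the polar tensegrity.

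First I will make the equilibrium conditions explicit on both sides. For the weaving, $\omega\ge0$ lies in the dual of the lifting cone precisely when $(\omega,v)=0$ for every $v\in\mathbb{R}^{2n}$, i.e. when $\omega$ annihilates the whole pairing; this is the reading used in the proof of \Cref{lem: weaving tight}. In \eqref{eq: weaving paring} the coefficient of $v_i$ is $\sum_{j\sim i}\omega_{ij}\varepsilon_{ij}q_{ij}$, with the orientation convention $q_{ji}=-q_{ij}$ so that each edge contributes antisymmetrically. Hence the condition is that this vertex sum vanishes for each $i$. For the tensegrity, the coefficient of $p'_i$ in \eqref{eq: tensegrity pairing}, with signs $\varepsilon'$, is $\sum_{j\sim i}\omega'_{ij}\varepsilon'_{ij}(p_i-p_j)$, which is just $-R^*(p)\omega'$ at vertex $i$.

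Next I will substitute $q_{ij}=s_{ij}\rho(p_i-p_j)$ and write $s_{ij}=|s_{ij}|\operatorname{sign}(s_{ij})$. Then
\[
\omega_{ij}\varepsilon_{ij}q_{ij}=\bigl(\omega_{ij}/|s_{ij}|\bigr)\,|s_{ij}|^2\operatorname{sign}(s_{ij})\varepsilon_{ij}\,\rho(p_i-p_j).
\]
The factor here does not match the desired one. To fix this, note the intended reading (as in the displayed computation preceding the theorem) is $\omega_{ij}\varepsilon_{ij}s_{ij}=(\omega_{ij}/|s_{ij}|)\,|s_{ij}|\,\varepsilon'_{ij}\,|s_{ij}|$. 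More precisely, I will check carefully which normalization makes the computation exact. With $\omega'_{ij}=\omega_{ij}|s_{ij}|$ one gets exactly $\omega_{ij}\varepsilon_{ij}q_{ij}=\omega'_{ij}\varepsilon'_{ij}\rho(p_i-p_j)$. I expect this bookkeeping to be the main obstacle: deciding whether the scaling is by $|s_{ij}|$ or its reciprocal. Either choice is a positive diagonal scaling, so the structure of the argument is unaffected. I will adopt whichever direction the map is read in, noting that the inverse of a positive scaling is again one.

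Finally, summing over $j$ and pulling the invertible linear map $\rho$ out of the vertex sum shows that the weaving vertex condition holds iff the tensegrity vertex condition holds, for each $i$. Because the scaling has positive entries, $\omega\ge0$ iff $\omega'\ge0$, and positivity of individual entries is also preserved. Thus $D$ restricts to a bijection between the two cones, and since $D$ is linear this is the claimed linear isomorphism.
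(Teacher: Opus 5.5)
Your argument follows the paper's: write the vertex conditions $\sum_{j\sim i}\omega_{ij}\varepsilon_{ij}q_{ij}=0$, substitute $q_{ij}=s_{ij}\rho(p_i-p_j)$, absorb $\operatorname{sign}(s_{ij})$ into $\varepsilon'_{ij}$, and pull out the invertible $\rho$. Your identity $\omega_{ij}\varepsilon_{ij}q_{ij}=(|s_{ij}|\omega_{ij})\,\varepsilon'_{ij}\,\rho(p_i-p_j)$ is the correct one. The paper's displayed chain mishandles the factor $s_{ij}$ at exactly this step and asserts $\omega'=(\omega_{ij}/|s_{ij}|)$. The paper's own matrix for the weaving pairing, $\operatorname{diag}(\varepsilon_{ij}s_{ij})R(p)(I_n\otimes\rho^{\top})$ with $\varepsilon_{ij}s_{ij}=|s_{ij}|\varepsilon'_{ij}$, agrees with you rather than with that chain.

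The one real problem is how you then handle the discrepancy. It is not true that either scaling works. If you set $\omega'_{ij}=\omega_{ij}/|s_{ij}|$, you get $\omega_{ij}\varepsilon_{ij}q_{ij}=|s_{ij}|^{2}\,\omega'_{ij}\varepsilon'_{ij}\rho(p_i-p_j)$. The edge-dependent weights $|s_{ij}|^{2}$ cannot be pulled out of the vertex sum. So a Whiteley stress divided entrywise by $|s_{ij}|$ is in general not a self-stress of the polar tensegrity.

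Consequently your final paragraph is false as written: it concludes that $D$, as defined in your first paragraph, is a bijection of cones. You need to commit to one direction. The map $D(\omega)=(|s_{ij}|\omega_{ij})$ sends the Whiteley stress cone onto the proper stress cone: your identity gives both inclusions, and positivity of the entries handles $\omega\ge 0$. The scaling $\omega_{ij}\mapsto\omega_{ij}/|s_{ij}|$ in the statement is correct only when read as $D^{-1}$, from tensegrity stresses to Whiteley stresses. That is the reading under which the theorem holds for the equilibrium-stress cones the paper intends.

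Two smaller points.

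First, the crossing point is symmetric: $q_{ji}=s_{ji}\rho(p_j-p_i)=q_{ij}$. The antisymmetry of each edge's contribution comes from $\varepsilon_{ji}=-\varepsilon_{ij}$. Since also $s_{ji}=-s_{ij}$, this makes $\varepsilon'_{ij}$ symmetric, as a cable/strut label must be. Your convention $q_{ji}=-q_{ij}$ contradicts the formula $q_{ij}=s_{ij}\rho(p_i-p_j)$, which you substitute for every ordered pair.

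Second, annihilating all liftings is not equivalent to annihilating all of $\mathbb{R}^{2n}$. If every lifting is tight, every $\omega\ge 0$ annihilates all liftings. The equilibrium reading is what the paper intends and uses in its own proof, so adopt it as the definition rather than asserting the equivalence.
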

As an immediate corollary, we obtain Whiteley's   theorem relating a weaving to its associated tensegrity:
\begin{theorem}\label{cor: flat weaving tensegrity inf eqv}
A weaving is flat if and only if its polar tensegrity  is infinitesimally rigid.
\end{theorem}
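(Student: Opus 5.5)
The plan is to chain the two characterizations already proved and connect them with the stress polarity. By \Cref{thm: whiteley flat weaving}, the weaving is flat if and only if (a) it is a flat grillage and (b) it has a positive proper stress. By \Cref{thm: tensegrity inf rigidity}, the polar tensegrity is infinitesimally rigid if and only if (a$'$) $(G,p)$ is infinitesimally rigid as a bar framework and (b$'$) its proper stress cone contains a positive vector. So it suffices to prove (a)$\Leftrightarrow$(a$'$) and (b)$\Leftrightarrow$(b$'$).

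For (b)$\Leftrightarrow$(b$'$) I will invoke \Cref{thm: whiteley stress polarity}. The Whiteley stress cone maps onto the proper stress cone of the polar tensegrity under the positive diagonal scaling $\omega_{ij}\mapsto\omega_{ij}/|s_{ij}|$. A positive diagonal scaling sends strictly positive vectors to strictly positive vectors, in both directions. Hence a positive proper stress exists on one side exactly when it exists on the other.

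For (a)$\Leftrightarrow$(a$'$) I will use the matrix form of the weaving pairing, $\operatorname{diag}(\varepsilon_{ij}s_{ij})R(p)(I_n\otimes\rho^\top)$.
\begin{itemize}
\item The tight liftings are the kernel of this matrix. Since $\operatorname{diag}(\varepsilon_{ij}s_{ij})$ is invertible (each $s_{ij}\neq 0$ because crossing lines are not parallel), this kernel is $(I_n\otimes\rho)\ker R(p)$, the image of the bar flexes under a linear isomorphism.
\item The isomorphism carries trivial bar flexes to trivial liftings. Translations $p'_i=t$ go to constant $v_i=\rho t$. The rotation $p'_i=\rho p_i$ goes to $v_i=\rho^2p_i=-p_i$, which is trivial.
\item The trivial flexes and the trivial liftings are both $3$-dimensional. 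For the flexes this needs $p$ not collinear. For the liftings it needs the $p_i$ not all equal and their span together with the constants to be $3$-dimensional; both follow from genericity of the configuration.
\end{itemize}
Since a linear isomorphism sending one $3$-dimensional space into another must map it onto it, every tight lifting is trivial exactly when every bar flex is trivial. This gives (a)$\Leftrightarrow$(a$'$).

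The only delicate point is this last matching of trivial subspaces and their dimensions. That is where I would be careful about degenerate configurations, for example all $p_i$ collinear, in which both trivial spaces shrink. Even then they should correspond under $\rho$, so I expect the equivalence still to hold, or at least to be excluded by the standing non-degeneracy assumptions. Combining the two equivalences finishes the proof.
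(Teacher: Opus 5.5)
Your proposal is correct and follows the paper's route: the paper gets this as an immediate corollary of \Cref{thm: whiteley flat weaving}, \Cref{thm: tensegrity inf rigidity} and the stress-cone isomorphism of \Cref{thm: whiteley stress polarity}, with the flat-grillage/bar-rigidity match coming implicitly from the factorization $\operatorname{diag}(\varepsilon_{ij}s_{ij})R(p)(I_n\otimes\rho^\top)$, which you spell out. Your worry about degenerate configurations is unnecessary: you already showed $I_n\otimes\rho$ sends the spanning set $\{\text{translations},\ \rho p\}$ of trivial flexes onto the spanning set $\{\text{constants},\ -p\}$ of trivial liftings, so the two trivial spaces correspond exactly with no dimension count or genericity assumption.
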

Because infinitesimal rigidity of a tensegrity is an open 
property, it is stable with respect to perturbation. Because any sufficiently 
nearby configuration of lines in $\mathbb{R}^3$ with the same over / under pattern 
projects to a nearby weaving, we obtain, via the polarity:
\begin{theorem}\label{thm: locally stable generic}
If $(G,p)$ is a flat weaving, then any sufficiently nearby configuration
of lines in $\mathbb{R}^3$ with the same over / under pattern must be flat.
\end{theorem}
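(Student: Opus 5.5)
We prove \Cref{thm: locally stable generic} through the polarity rather than directly on the weaving. By \Cref{cor: flat weaving tensegrity inf eqv}, flatness of $(G,p)$ with signs $\varepsilon$ is equivalent to infinitesimal rigidity of the polar tensegrity $(G,p)$ with signs $\varepsilon'_{ij}=\operatorname{sign}(s_{ij})\varepsilon_{ij}$. The plan has three steps. First, show that this infinitesimal rigidity is an open condition in $p$ for fixed signs. Second, show that a nearby spatial configuration of lines with the same over / under pattern projects to a weaving $(G,\tilde p)$ with $\tilde p$ close to $p$ and the same $\varepsilon$. Third, check that the polar signs $\varepsilon'$ are unchanged under the perturbation and apply \Cref{cor: flat weaving tensegrity inf eqv} in reverse.

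For openness we use \Cref{thm: tensegrity inf rigidity}. Infinitesimal rigidity of the tensegrity means two things: $R(p)$ has rank $2n-3$, and there is a strictly positive $\omega$ with $R(p)^\top\operatorname{diag}(\varepsilon'_{ij})\omega=0$. Rank $2n-3$ is open because it is maximal possible rank, witnessed by a nonvanishing minor.

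The positive stress is the main obstacle, since equilibrium stresses do not in general persist under perturbation. Here rigidity saves us. Because the bar framework is infinitesimally rigid, the stress space has dimension $m-(2n-3)$, which stays constant near $p$. The cokernel of $R(\tilde p)^\top$ therefore varies continuously with $\tilde p$: choose a basis of the left kernel given by rational functions of $\tilde p$ built from the nonvanishing minor, for instance via Cramer's rule. We then project $\omega$ onto the stress space of $(G,\tilde p)$ with the same signs. This gives a stress $\tilde\omega$ depending continuously on $\tilde p$ with $\tilde\omega\to\omega$, so $\tilde\omega$ stays strictly positive near $p$.

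For the second step, a configuration of lines in $\mathbb{R}^3$ near the lifted lines of $(G,p)$ projects to planar lines $\{\langle x,\tilde p_i\rangle=1\}$ with $\tilde p_i$ near $p_i$. These avoid the origin and keep nonparallel crossings, since $\det[p_i\,p_j]\neq0$ is open. The crossing points $\tilde q_{ij}$ are still defined, and $\operatorname{sign}(\tilde s_{ij})=\operatorname{sign}(s_{ij})$ by continuity of the determinant. Since the over / under pattern is preserved by hypothesis, the weaving $(G,\tilde p)$ with signs $\varepsilon$ has exactly the same polar signs $\varepsilon'$ as before. By the first step its polar tensegrity is infinitesimally rigid, so by \Cref{cor: flat weaving tensegrity inf eqv} the nearby weaving is flat.
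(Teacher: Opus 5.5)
Your proof is correct and follows the same route as the paper. The paper's argument is that infinitesimal rigidity of the polar tensegrity is open, that nearby lines in $\mathbb{R}^3$ with the same pattern project to a nearby weaving, and that \Cref{cor: flat weaving tensegrity inf eqv} then applies. You also supply two details the paper leaves implicit. First, a strictly positive stress persists because the stress space has constant dimension near $p$; note that this space is the kernel of $R(\tilde p)^\top\operatorname{diag}(\varepsilon')$, not the ``cokernel of $R(\tilde p)^\top$'' as you wrote. Second, $\operatorname{sign}(s_{ij})$ is invariant under small perturbation, so the polar signs $\varepsilon'$ do not change.
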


\subsection{Statics correspondence with tensegrities}\label{sec: static polarity}
We can also describe the statics of a weaving.  Denote by $\hat{x}$ the unit vector 
in direction $x$.
The load space on a beam $L_i$ in a weaving is most 
naturally described by a force-couple pair 
$(M_i\hat{p}_i, T_i e_3)$, of a rotational moment 
around an axis through the origin orthogonal to $L_i$ and a 
vertical translational force.  To represent the couple as a 
single vector, we fold the vertical force into its tangential 
component about the origin, yielding a load 
$f_i = M_i\hat p_i - (T_i/\|p_i\|)\widehat{\rho(p_i)}$.%
%
%

The condition for a loading $f$ of a weaving to be 
in equilibrium is then
\[
    \sum_{i\in V(G)} f_i = 0\qquad \text{and}\qquad
    \sum_{i\in V(G)} \langle \rho(p_i),f_i\rangle = 0,
\]
which is formally similar to the condition defining an equilibrium 
loading of a tensegrity.  However, the interpretation 
through the polarity exchanges the roles of forces and 
moments.  Noticing that $\langle \rho(p_i),f_i\rangle = -T_i$, 
we get 
\[
    \sum_{i\in V(G)} \langle \rho(p_i),f_i\rangle = 0
    \qquad \Longleftrightarrow \qquad 
    \sum_{i\in V(G)} T_i = 0,
\]
so the condition for no net torque about the vertical axis in the 
tensegrity load corresponds 
to no net vertical force on the weaving. On the other hand, 
the sum of the $f_i$ vanishing is, by construction, a vanishing 
in-plane moment. One can understand this as saying that 
the moment of tensegrity has a vertical axis and the forces 
are in-plane, whereas for a weaving the moments have in-plane 
axes and the forces are vertical.

A Whiteley stress $\omega$ resolves an equilibrium 
load $f$, if, for each $i$, 
\[
    f_i = \frac{1}{\|p_i\|}\sum_{j: ij\in E} \omega_{ij}\varepsilon_{ij}\rho(q_{ij}),
\]
and we say that the weaving is \emph{statically flat} if every 
equilibrium load on the weaving is resolved by a proper Whiteley stress. 
To expose the polarity from this perspective, recall that 
$\rho(q_{ij}) = -s_{ij}(p_i - p_j)$, so we get 
\[
    \|p_i\|f_i = \sum_{j: ij\in E} -s_{ij}\omega_{ij}\varepsilon_{ij}(p_i - p_j) = 
    -\sum_{j: ij\in E}\omega'_{ij}\varepsilon'_{ij}(p_i - p_j),
\]
and so the loading $f$ on the weaving is resolved by the proper Whiteley stress $\omega$ in the 
weaving if and only if the loading $f'$ described by the diagonal scaling $f_i\mapsto \|p_i\|f_i$
is resolved by the proper stress $\omega'$ in the polar tensegrity.  
We have now exposed another manifestation of the polarity, which, via the 
equivalence of static and infinitesimal rigidity in tensegrities, yields a
new derivation of \Cref{cor: flat weaving tensegrity inf eqv}.
\begin{theorem}\label{thm: static weaving tensegrity polarity}
A weaving is statically flat if and only if its polar tensegrity is 
statically rigid.
\end{theorem}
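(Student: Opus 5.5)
The plan is to reduce the statement to the stress-level correspondence just established, using the two equilibrium notions and the invertible load scaling $f_i\mapsto \|p_i\|f_i$. I will show that this scaling gives a bijection between equilibrium loads on the weaving and equilibrium loads on the polar tensegrity. Together with the bijection $\omega\mapsto\omega'$ of \Cref{thm: whiteley stress polarity}, which preserves non-negativity, the resolvability statements then transfer in both directions.

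\textbf{Step 1.} Fix an equilibrium load $f$ on the weaving and a non-negative $\omega$. The computation preceding the statement already shows that $\omega$ resolves $f$ in the weaving if and only if $\omega'=(\omega_{ij}/|s_{ij}|)$ resolves $f'=(\|p_i\|f_i)$ in the polar tensegrity. This uses $\rho(q_{ij})=-s_{ij}(p_i-p_j)$ and $s_{ij}\varepsilon_{ij}=|s_{ij}|\varepsilon'_{ij}$. Since the map $\omega\mapsto\omega'$ is a positive diagonal scaling, $\omega\ge 0$ if and only if $\omega'\ge0$, and the correspondence is invertible.

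\textbf{Step 2.} Show that $f\mapsto f'$ is a bijection between equilibrium loads of the weaving and equilibrium loads of the tensegrity. This is the step I expect to be the main obstacle, because the two equilibrium conditions are not literally preserved by rescaling each $f_i$ by $\|p_i\|$. I would first try to sidestep it using the fact that anything of the form $R^*(p)\omega'$ automatically satisfies the tensegrity equilibrium conditions. The same holds in the weaving for loads of the form $\frac{1}{\|p_i\|}\sum_j\omega_{ij}\varepsilon_{ij}\rho(q_{ij})$, by antisymmetry of the edge terms. So the only real issue is to match the two load spaces as targets. Both are the images of the respective (weaving and tensegrity) pairing matrices, and both have codimension $3$ in $\mathbb{R}^{2n}$, namely the annihilators of the three trivial motions. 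The scaling is an invertible linear map that sends the image of the weaving equilibrium operator onto the image of $R^*(p)$, by Step 1 applied to arbitrary real stresses. I would therefore argue at the level of these linear images: the equilibrium loads of each structure are exactly the image of its unrestricted stress map, which holds when the bar framework spans the plane. It follows that the scaling carries one load space onto the other.

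\textbf{Step 3.} Conclude. Suppose the polar tensegrity is statically rigid, and let $f$ be an equilibrium load on the weaving. Then $f'$ is an equilibrium load on the tensegrity, so it is resolved by some $\omega'\ge0$. Pulling back gives $\omega\ge0$ resolving $f$, so the weaving is statically flat. The converse is symmetric, using the inverse scaling. As a consistency check, \Cref{thm: tensegrity static kinematic duality} combined with \Cref{cor: flat weaving tensegrity inf eqv} yields the same equivalence through flatness. This also offers an alternative route if Step 2 resists: prove directly that static flatness is equivalent to flatness, by the separating-hyperplane argument used for \Cref{thm: tensegrity static kinematic duality} with the weaving pairing. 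Then chain the three equivalences.
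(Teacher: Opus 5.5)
Your Steps 1 and 3 are exactly the paper's argument, which stops after the resolution computation and appeals to tensegrity statics. Step 2, the point you rightly flag, is a genuine gap, and it cannot be closed, because the claims you use for it are false.

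The prefactor $1/\|p_i\|$ depends on the vertex, not the edge. So for $f_i=\frac{1}{\|p_i\|}\sum_j\omega_{ij}\varepsilon_{ij}\rho(q_{ij})$ one gets $\sum_i f_i=\sum_{ij\in E}\omega_{ij}\varepsilon_{ij}\rho(q_{ij})\bigl(\|p_i\|^{-1}-\|p_j\|^{-1}\bigr)$, which is nonzero in general. Also, $\operatorname{im}R^*(p)=(\ker R(p))^\perp$ equals the equilibrium loads only when the bar framework is infinitesimally rigid, not merely when it spans the plane.

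More decisively, the scaling itself does not preserve equilibrium. Let $E_0=T^\perp$ be the equilibrium loads, where $T$ is spanned by $(e_1,\dots,e_1)$, $(e_2,\dots,e_2)$ and $(\rho(p_1),\dots,\rho(p_n))$, and let $D=\operatorname{diag}(\|p_i\|I_2)$. Then $DE_0=(D^{-1}T)^\perp$. Testing whether $D^{-1}(e_1,\dots,e_1)$ and $D^{-1}(e_2,\dots,e_2)$ lie in $T$ shows that $D^{-1}T=T$ forces all $\|p_i\|$ to be equal (the $p_i$ being distinct).

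Every load resolved in the weaving satisfies $Df\in\operatorname{im}R^*(p)\subseteq E_0$. Hence static flatness, as literally defined, forces $E_0\subseteq D^{-1}E_0$, i.e.\ $DE_0=E_0$. Take the paper's own $K_{4,4}$ example, where $\|a_1\|=21/20$ and all other points lie on the unit circle. By construction the weaving is flat and its polar tensegrity is statically rigid, yet with the resolution rule as written the weaving is not statically flat. Your fallback fails for the same reason: the literal resolution map is not the adjoint of the lifting map, so the separating-hyperplane argument does not apply to it.

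The missing idea is that the factor $1/\|p_i\|$ is spurious. The load $f_i$ is the in-plane moment about the origin of the vertical loads on $L_i$. A vertical force $T$ at $x$ has moment $-T\rho(x)$, which at the foot point $p_i/\|p_i\|^2$ is exactly the term $-(T_i/\|p_i\|)\widehat{\rho(p_i)}$. Read $\omega_{ij}\varepsilon_{ij}$ as the signed upward contact force on $L_i$ at $q_{ij}$, as the virtual-work pairing $(\omega,v)$ does. Moment balance on $L_i$ then gives $f_i=\sum_j\omega_{ij}\varepsilon_{ij}\rho(q_{ij})$, which is $\rho$ composed with the adjoint of $v\mapsto(\varepsilon_{ij}\langle q_{ij},v_i-v_j\rangle)$.

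This rule is consistent with vertical balance. Indeed, $\langle\rho(p_i),f_i\rangle=\sum_j\omega_{ij}\varepsilon_{ij}\langle p_i,q_{ij}\rangle=\sum_j\omega_{ij}\varepsilon_{ij}$, and this equals $-T_i$ precisely by vertical balance of $L_i$.

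With the corrected rule, $f_i=-\sum_j|s_{ij}|\omega_{ij}\varepsilon'_{ij}(p_i-p_j)$. So $\omega\ge0$ resolves $f$ in the weaving if and only if $\omega'=(|s_{ij}|\omega_{ij})\ge0$ resolves the \emph{same} $f$ in the polar tensegrity. (The identities you quote give $|s_{ij}|\omega_{ij}$ rather than $\omega_{ij}/|s_{ij}|$; this is harmless, since any positive diagonal scaling works.) The two equilibrium conditions then coincide verbatim, and the theorem follows from your Step 1 with no load scaling at all. Your separating-hyperplane fallback also goes through under the corrected rule.
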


\subsection{The index theorem}
The polarity between weavings and tensegrities restricts to 
grillages, and 
so the Maxwell--Calladine Index Theorem implies the corresponding statement for 
grillages.
\begin{theorem}[Grillage Index Theorem]\label{thm: index theorem for weavings}
Let $G$ be a graph with $n$ vertices and $m$ edges, and $(G,p)$ 
define a grillage.  If we 
denote by $\ell$ the dimension of the space of non-trivial liftings and 
$s$ the dimension of the space of Whiteley stresses, we have 
\(
    \ell - s + 3 = 2n - m.
\)
\end{theorem}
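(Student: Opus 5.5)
The plan is rank--nullity applied to the matrix of the weaving virtual work pairing, $W = \operatorname{diag}(\varepsilon_{ij}s_{ij})R(p)(I_n\otimes\rho^\top)$, which is an $m\times 2n$ matrix. For a grillage there are no inequalities, so the lifting constraints are the equations $\langle q_{ij}, v_i - v_j\rangle = 0$. The space of liftings is therefore exactly $\ker W$. The space of Whiteley stresses, meaning the $\tau$ with $(\tau,v)=0$ for all $v$ and with no sign condition, is $\ker W^\top$.

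The first step is to observe that both outer factors of $W$ are invertible. The left factor $\operatorname{diag}(\varepsilon_{ij}s_{ij})$ is invertible because crossing lines are non-parallel, so $s_{ij}\neq 0$. The right factor $I_n\otimes\rho^\top$ is invertible because $\rho$ is a rotation. Hence $\operatorname{rank} W = \operatorname{rank} R(p)$. Moreover $\ker W$ is carried isomorphically onto the infinitesimal flexes of the bar framework $(G,p)$ by $v_i\mapsto\rho^\top v_i$. Likewise $\ker W^\top$ is carried onto the self-stresses of $(G,p)$ by the diagonal scaling of \Cref{thm: whiteley stress polarity}, with signs absorbed. This is just the linear (grillage) shadow of the polarity already established.

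Next, rank--nullity gives
\[
\dim\ker W = 2n - \operatorname{rank} W, \qquad \dim\ker W^\top = m - \operatorname{rank} W .
\]
Subtracting these yields $\dim\ker W - s = 2n - m$.

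It then remains to identify $\dim\ker W = \ell + 3$. We showed above that constant liftings $v_i = c$ and the lifting $v_i = p_i$ are tight. These span the trivial liftings, and we need that space to be exactly $3$-dimensional. Under $\rho^\top$ they correspond to translations and to the rotation $p_i\mapsto\rho^\top p_i$, which are the trivial flexes of the framework. Those span a $3$-dimensional space provided the $p_i$ are not all collinear, as noted after \Cref{thm: tensegrity inf rigidity}. Defining $\ell$ as the dimension of the quotient by trivial liftings then gives $\ell + 3 - s = 2n - m$.

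The main obstacle is this dimension count of trivial liftings. In the degenerate case where the points $p_i$ are collinear, meaning all lines are concurrent in the polar picture, the count can drop. I would handle it by stating this nondegeneracy as a standing assumption, in the same way as for tensegrities, and by checking directly that $v_i = c$ and $v_i = p_i$ are linearly independent in the nondegenerate case.
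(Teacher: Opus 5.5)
Your proof is correct and follows the paper's route. The paper simply notes that the polarity restricts to grillages and invokes the Maxwell--Calladine index theorem, which is exactly your observation that $W$ differs from $R(p)$ only by invertible factors, followed by rank--nullity. The one thing to fix is your degenerate-case worry: the constant liftings and $v_i=p_i$ span a $3$-dimensional space as soon as two of the $p_i$ differ, which holds automatically once $G$ has an edge (crossing lines have $\det[p_i\,p_j]\neq 0$), so collinear $p_i$ (concurrent lines) do not reduce this count and you need no extra standing assumption.
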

The Grillage Index Theorem implies that, to design a flat weaving, we must have $m\ge 2n - 2$, 
since \Cref{thm: whiteley flat weaving} requires that $s\ge 1$.  Since the r.h.s. $2n - m$
is the same in the Maxwell--Calladine Index Theorem, we recover a result of Tarnai \cite{Tarnai}:
\begin{theorem}\label{thm: tarnai m - s}
Let $G$ be a graph with $n$ vertices and $m$ edges, and $(G,p)$ 
define a grillage with an $\ell$-dimensional space of non-trivial liftings 
and an $s$-dimensional space of Whiteley stresses.  Denote by $M$ and $S$
the dimensions of the space of non-trivial infinitesimal motions and equilibrium 
stresses of the polar bar framework, respectively.  Then $\ell - s = M - S$.
\end{theorem}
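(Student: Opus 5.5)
The plan is to combine the Grillage Index Theorem (\Cref{thm: index theorem for weavings}) with the classical Maxwell--Calladine Index Theorem for the polar bar framework $(G,p)$, and observe that both have the same right-hand side $2n - m$. For a bar framework in the plane (not lying on a line, so that the trivial motions form a $3$-dimensional space), Maxwell--Calladine gives
\[
    M - S + 3 = 2n - m,
\]
where $M$ counts non-trivial infinitesimal motions and $S$ counts equilibrium stresses. Subtracting this from $\ell - s + 3 = 2n - m$ yields $\ell - s = M - S$ immediately.

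To make this honest, I would first justify \Cref{thm: index theorem for weavings} directly from the matrix of the weaving pairing, since the grillage case is the one with no sign constraints. Liftings of a grillage (no inequalities, only equalities for tight liftings) are the kernel of the matrix $A = \operatorname{diag}(\varepsilon_{ij}s_{ij})R(p)(I_n\otimes\rho^\top)$, and Whiteley stresses (dropping non-negativity) are the cokernel, i.e.\ the kernel of $A^\top$. The two outer factors are invertible: $\operatorname{diag}(\varepsilon_{ij}s_{ij})$ has nonzero entries because crossing lines are non-parallel, and $I_n\otimes\rho^\top$ is a rotation on each block. Hence $\dim\ker A = \dim\ker R(p)$ and $\dim\ker A^\top = \dim\ker R(p)^\top$, and rank--nullity gives $\dim\ker A - \dim\ker A^\top = 2n - m$. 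The trivial liftings form a $3$-dimensional subspace of $\ker A$ (spanned by constant $v$ and $v_i = p_i$), matching the $3$ trivial motions of $(G,p)$, which gives $\ell + 3 - s = 2n - m$.

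In fact this argument proves the stronger statement $\ell = M$ and $s = S$, which implies $\ell - s = M - S$. The main point to check is the count of trivial liftings. I need that the constant liftings and $v_i = p_i$ span a genuinely $3$-dimensional space, which requires the $p_i$ not to all lie on a line through the origin and, more precisely, not to be affinely degenerate. I also need these to correspond under $I_n\otimes\rho^\top$ to the trivial motions (translations and rotation) of $(G,p)$. Translations $\rho^\top$-map to constants, and the rotation $p_i' = \rho(p_i)$ maps to $v_i = p_i$. So the trivial subspaces correspond exactly under the polarity, and the non-trivial quotients have equal dimension. I would state the standing non-collinearity assumption explicitly, as the paper does for tensegrities.
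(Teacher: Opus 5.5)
Your proposal is correct and follows the paper's argument: subtract the Maxwell--Calladine count $M - S + 3 = 2n - m$ for $(G,p)$ from the Grillage Index Theorem $\ell - s + 3 = 2n - m$. Your factorization $A = \operatorname{diag}(\varepsilon_{ij}s_{ij})R(p)(I_n\otimes\rho^{\top})$ is exactly the polarity the paper invokes for the grillage count, and it gives the sharper $\ell = M$ and $s = S$ directly. (One small correction: the trivial liftings are $3$-dimensional as soon as the $p_i$ are not all equal, which is automatic for distinct lines, so non-collinearity is sufficient but not required.)
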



\subsection{The Airy and Whiteley stress functions}
\begin{figure}[h]
  \centering
  \includegraphics[width=0.80\textwidth, trim=0 40 0 30, clip]{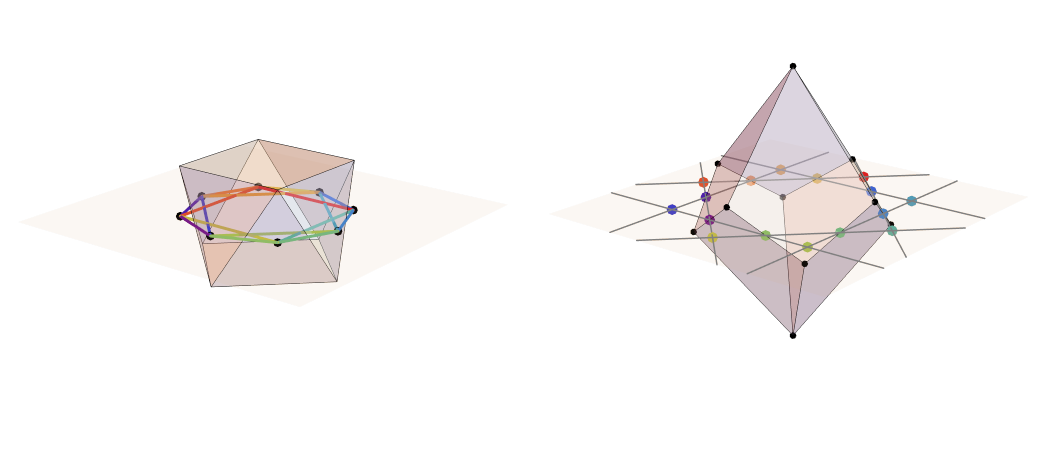}
  \caption{Airy and Whiteley stress functions.  The 
  framework on the left supports an equilibrium stress, as 
  certified by its Airy stress function.  Its polar weaving on the 
  right supports a Whiteley stress, as certified by its Whiteley 
  stress function.
  It is noted that as the tensegrity is the polar dual to the weaving, similarly the Airy stress function is the polar dual to the Whiteley stress function. As such, there is a duality between the stresses in the tensegrity and the Whiteley stresses. }
\end{figure}
In graphic statics, one visualizes an equilibrium stress in a 
tensegrity as a polyhedral lifting with edges projecting vertically
onto the members known as the Airy stress function\footnote{In rigidity, this is 
called the Maxwell--Cremona lifting of a stress.}.  
\begin{definition}\label{def: whiteley stress function}
Let $(G,p)$ be a weaving with a Whiteley stress $\omega$.  
The \emph{Whiteley stress function}
is the polar dual of the Airy stress function of the equilibrium stress $\omega'$ on the polar tensegrity.
\end{definition}
We define the Whiteley stress function this way because a 
weaving does not have a natural facial structure, but the 
polar to the Airy stress function does.  
Although the Airy stress function is not necessarily 
a convex surface, it has a well-defined polar
because $\omega'$ is an equilibrium stress.  We also 
note that, when the origin is not in the interior of
the Airy stress function, the Whiteley 
stress function is non-compact.

\section{Designing a flat weaving}\label{sec:example}
In this section we illustrate how to design a flat weaving using the correspondence developed above. 

\subsection{Methodology}
The input to the method is a pair $(G,p)$, which we interpret as describing a grillage.  The steps are:
\begin{enumerate}
    \item Write the rigidity matrix $R(p)$ of the framework $(G,p)$.
    \item Check that $(G,p)$ is infinitesimally rigid; if it is not, there is no
        stable over / under pattern.
    \item Select an equilibrium stress $\omega$ of $(G,p)$ that is nowhere zero. If none
        exists, throw away all the edges with zero stress for a  
        randomly sampled 
        $\omega$ and go back to the beginning.
    \item For each edge $ij\in E(G)$  set $\varepsilon_{ij} = \operatorname{sign}(\omega_{ij})\operatorname{sign}(s_{ij})$. Thus we have the over / under pattern. 
\end{enumerate}
If the method finishes, the over / under pattern defined by $\varepsilon_{ij}$ determines a flat 
weaving, because $\omega_{ij}/|s_{ij}|$ are the coefficients of a proper Whiteley stress under the 
polarity.

\subsection{Worked example}

\begin{figure}[htp]
\begin{center}
\begin{tikzpicture}[scale=2.3]

\tikzset{
  vtx/.style={circle, draw, fill=black, inner sep=0pt, minimum size=4pt},
  lbl/.style={rectangle, draw=white, fill=white, inner sep=2pt}
}

\node[vtx] (a1) at (75:1.05) {};
\node[vtx] (a2) at (105:1) {};
\node[vtx] (a3) at (255:1) {};
\node[vtx] (a4) at (285:1) {};

\node[vtx] (b1) at (120:1) {};
\node[vtx] (b2) at (150:1) {};
\node[vtx] (b3) at (300:1) {};
\node[vtx] (b4) at (330:1) {};

\foreach \i in {1,...,4}{
  \foreach \j in {1,...,4}{
    \draw (a\i) -- (b\j);
  }
}
\node[rectangle, draw=white, fill=white,
      anchor=south west, xshift=1pt, yshift=1pt] at (a1) {$a_1$};

\node[rectangle, draw=white, fill=white,
      anchor=east, xshift=8pt, yshift=9pt] at (a2) {$a_2$};

\node[rectangle, draw=white, fill=white,
      anchor=north east, xshift=-2pt, yshift=-5pt] at (a3) {$a_3$};

\node[rectangle, draw=white, fill=white,
      anchor=south east, xshift=12pt, yshift=-16pt] at (a4) {$a_4$};

\node[rectangle, draw=white, fill=white,
      anchor=south, yshift=4pt] at (b1) {$b_1$};

\node[rectangle, draw=white, fill=white,
      anchor=south, yshift=6pt] at (b2) {$b_2$};

\node[rectangle, draw=white, fill=white,
      anchor=north, xshift=3pt, yshift=-5pt] at (b3) {$b_3$};

\node[rectangle, draw=white, fill=white,
      anchor=north, yshift=-6pt] at (b4) {$b_4$};
\end{tikzpicture}
\hspace{1cm}
\begin{tikzpicture}[scale=1]
\draw[white] (-2,-2) rectangle (2,2);

\begin{scope}
\clip (-2.7,-2.7) rectangle (2.7,2.7);


\draw[blue, thick, domain=-3:3, samples=2]
plot (\x,{(20/21 - cos(75)*\x)/sin(75)});

\draw[blue, thick, domain=-3:3, samples=2]
  plot (\x,{(1 - cos(105)*\x)/sin(105)});

\draw[blue, thick, domain=-3:3, samples=2]
  plot (\x,{(1 - cos(255)*\x)/sin(255)});

\draw[blue, thick, domain=-3:3, samples=2]
  plot (\x,{(1 - cos(285)*\x)/sin(285)});

\draw[red, thick, domain=-3:3, samples=2]
  plot (\x,{(1 - cos(120)*\x)/sin(120)});

\draw[red, thick, domain=-3:3, samples=2]
  plot (\x,{(1 - cos(150)*\x)/sin(150)});

\draw[red, thick, domain=-3:3, samples=2]
  plot (\x,{(1 - cos(300)*\x)/sin(300)});

\draw[red, thick, domain=-3:3, samples=2]
  plot (\x,{(1 - cos(330)*\x)/sin(330)});






\foreach \A in {75,105,255,285}{
  \foreach \B in {120,150,300,330}{

    \pgfmathsetmacro{\ca}{cos(\A)}
    \pgfmathsetmacro{\sa}{sin(\A)}

    \pgfmathsetmacro{\da}{1}
    \ifnum\A=75
      \pgfmathsetmacro{\da}{20/21}
    \fi

    \pgfmathsetmacro{\cb}{cos(\B)}
    \pgfmathsetmacro{\sb}{sin(\B)}
    \pgfmathsetmacro{\db}{1}

    \pgfmathsetmacro{\det}{\ca*\sb - \sa*\cb}

    \pgfmathsetmacro{\x}{(\da*\sb - \sa*\db)/\det}
    \pgfmathsetmacro{\y}{(\ca*\db - \da*\cb)/\det}

    \fill[black] (\x,\y) circle (1.5pt);
  }
}

\node[blue, fill=white, inner sep=2pt] at (2.2,{(1 - cos(75)*2.2)/sin(75)}) {$a_1$};

\node[blue, fill=white, inner sep=2pt]
      at (1.3,{(1 - cos(105)*2)/sin(105)})
      {$a_2$};

    \node[blue, fill=white, inner sep=2pt]
      at (1.8,{(1 - cos(255)*1.8)/sin(255) - 0.15})
      {$a_3$}; 
\node[blue, fill=white, inner sep=2pt]
      at (2,{(1 - cos(285)*2)/sin(285)})
      {$a_4$};
      \node[red, fill=white, inner sep=2pt]
      at (1.8,{(1 - cos(120)*1.8)/sin(120) + 0.15})
      {$b_1$};
     \node[red, fill=white, inner sep=2pt]
      at (-0.8,0)
      {$b_2$};
     \node[red, fill=white, inner sep=2pt]
      at (-0.9,-1.6)
      {$b_3$};
      \node[red, fill=white, inner sep=2pt]
      at (0.9,0)
      {$b_4$};
\end{scope}

\end{tikzpicture}
\end{center}
\caption{A realization of $K_{4,4}$ in the plane that is infinitesimally rigid, and hence admits a 
$3$-dimensional space of self-stresses. On the right is the associated polar weaving, where each 
line corresponds to a vertex of the framework on the left, and each crossing point (solid dot) 
corresponds to an edge. The over / under structure is determined by the sign of a 
generic self-stress of the framework.
Note that `crossings' only occur where blue and red lines meet. }
\end{figure}


We present an extended example based on the complete bipartite graph $K_{4,4}$.
For convenience, we partition the point configuration 
$p = (a,b)$ into the sub-configurations corresponding to 
the parts of the partition.  We index the edges of 
$K_{4,4}$ by 
$a_1b_1, \ldots, a_1b_4, \ldots, a_4b_1, \ldots, a_4b_4$.

\paragraph*{The configuration}
We use the configuration given by 
$a = (a_1, \ldots, a_4)$ and $b = (b_1, \ldots, b_4)$, where 
\[
a_1=(1 + \frac{1}{20})(\cos \tfrac{5\pi}{12},\sin \tfrac{5\pi}{12}),\;
a_2=(\cos \tfrac{7\pi}{12},\sin \tfrac{7\pi}{12}),\;
a_3=(\cos \tfrac{17\pi}{12},\sin \tfrac{17\pi}{12}),\;
a_4=(\cos \tfrac{19\pi}{12},\sin \tfrac{19\pi}{12})
\]
\[
b_1=(\cos \tfrac{2\pi}{3},\sin \tfrac{2\pi}{3}),\;
b_2=(\cos \tfrac{5\pi}{6},\sin \tfrac{5\pi}{6}),\;
b_3=(\cos \tfrac{5\pi}{3},\sin \tfrac{5\pi}{3}),\;
b_4=(\cos \tfrac{11\pi}{6},\sin \tfrac{11\pi}{6}).
\]
Figure 3 shows both the weaving and associated framework. 

\paragraph*{Steps 1 and 2: Infinitesimal rigidity}
The rigidity matrix $R(a,b)$ is the $16 \times 16$ matrix whose rows are
indexed by the edges $a_i b_j$ and whose columns are indexed by the
vertices $a_1,\ldots,a_4,b_1,\ldots,b_4$ (each contributing two columns
for its $x$- and $y$-coordinates).  The row for edge $a_i b_j$ has the
block $(a_i - b_j)^\top$ in the two columns of $a_i$, the block
$(b_j - a_i)^\top$ in the two columns of $b_j$, and zeros elsewhere:
{\tiny
\[
R(a,b) =
\bordermatrix{
        & a_1 & a_2 & a_3 & a_4 & b_1 & b_2 & b_3 & b_4 \cr
a_1b_1  & a_1-b_1 & 0 & 0 & 0 & b_1-a_1 & 0 & 0 & 0 \cr
a_1b_2  & a_1-b_2 & 0 & 0 & 0 & 0 & b_2-a_1 & 0 & 0 \cr
a_1b_3  & a_1-b_3 & 0 & 0 & 0 & 0 & 0 & b_3-a_1 & 0 \cr
a_1b_4  & a_1-b_4 & 0 & 0 & 0 & 0 & 0 & 0 & b_4-a_1 \cr
a_2b_1  & 0 & a_2-b_1 & 0 & 0 & b_1-a_2 & 0 & 0 & 0 \cr
\vdots  &\vdots  & & & & & & & \vdots \cr
a_4b_4  & 0 & 0 & 0 & a_4-b_4 & 0 & 0 & 0 & b_4-a_4 \cr
}
\] 
}
where each displayed entry $a_i - b_j$ denotes the 
$1\times 2$ row vector $(a_i - b_j)^\top$ occupying the 
two coordinate columns of the indicated vertex.

Infinitesimal rigidity can be checked either by direct computation 
or using the fact 
that a framework $(K_{4,4},a,b)$ in general 
position is infinitesimally flexible if and only if the configuration 
lies on a conic.  In this case, the points $a_2,  \ldots, b_4$
lie on the unit circle, which considering dimensions is the 
only conic through all of them, and $a_1$ is perturbed off the 
unit circle.  Hence, $(K_{4,4},a,b)$ is infinitesimally rigid.

\paragraph*{Step 3: Select an equilibrium stress}
The Maxwell--Calladine Index Theorem implies that $(K_{4,4},a,b)$ has a
$3$-dimensional space of self-stresses.  We can select one 
computationally, but in this case, we appeal to the fact that, if we 
let 
\[
    \alpha_1a_1 + \cdots \alpha_4a_4 = 0\qquad\text{and}\qquad
    \beta_1b_1 + \cdots \beta_4b_4 = 0
\]
be the unique and nowhere zero (by general position) 
affine dependencies among the parts of the configuration, 
then $\omega_{a_ib_j} = \alpha_i\beta_j$ is an equilibrium 
stress that is non-zero on every edge.

\paragraph*{Step 4: Compute the over / under pattern}
The final step is to compute the signs $\varepsilon_{a_ib_j}$.
Working numerically we find
that the affine dependencies are (up to scale)
\[
\alpha \approx (-0.488,\ 0.500,\ -0.512,\ 0.500) \qquad\text{and}
\qquad
\beta = \tfrac{1}{2}(-1,\ 1,\ -1,\ 1).
\]
Combining the sign of $\omega_{a_ib_j}$ with the
sign of $s_{a_ib_j} = \det[a_i\, b_j]$ yields the over / under pattern
$\varepsilon_{a_ib_j} = \operatorname{sign}(\omega_{a_ib_j})\,
\operatorname{sign}(s_{a_ib_j})$:
\[
\varepsilon =
\bordermatrix{
     & b_1 & b_2 & b_3 & b_4 \cr
a_1  & + & - & - & + \cr
a_2  & - & + & + & - \cr
a_3  & - & + & + & - \cr
a_4  & + & - & - & + \cr
}
\]
Moreover, the magnitude of the forces between the beams at the 
contact point $q_{ij}$ is, up to a global positive scaling, 
$|\alpha_i\beta_j|\det [a_i\, b_j]^{-2}$.  
Thus, we remarkably obtain all the information required to understand the behaviour of weaving whilst only considering the dual tensegrity, and we have designed an over / under pattern which is stable.
It is easy to see how one could use the underlying tensegrity framework as a design freedom to develop well-behaved weavings. 


\section{Conclusions and outlook}
We provided an updated exposition of the investigations of Whiteley and Tarnai into the 
stability of planar structures comprising of woven elastic beams, completing the theory to 
include statics, and deriving a principled method for computing stable over / under 
patterns for a fixed geometry.  A remaining combinatorial question is to enumerate
\emph{all} stable over / under patterns.  This may be difficult, since it is closely 
related to the question of enumerating possible orientations of a linear matroid, but 
may be approachable for small problem sizes.

An intriguing question is to find conditions in which woven structures can be 
structurally rigid without 
relying on  frictional (no slip) constraints between beams, as opposed to simply remaining flat.  
This is impossible in the idealized flat case, considered here, because
as discussed in \S \ref{sec: static polarity}, a flat weaving can only 
respond to a subspace of all $3$D equilibrium loads.  Weavings on a 
curved surface are not so restricted, opening the possibility of a 
full rigidity theory.

\section*{Acknowledgements} BS was partially supported by UK Research and 
Innovation through the Small Grant Scheme project UKRI2397.
LST was partially supported by UK Research and Innovation through the 
Small Grant Scheme project UKRI1112 and St Andrews Impact and Innovation.
CM, BS and LST thank ICERM, which is supported by the National Science Foundation (NSF)
under Grant No. DMS-2424556, for its hospitality during the semester program 
``Geometry of Packings, Materials, and Rigid Frameworks.''


\printbibliography


\end{document}